%%%%%%%%%%%class file
\documentclass{imamat}

%%%%%%%%%%%%%
%\gridframe{N}- page grid off
%\gridframe{N}- page grid on 
\gridframe{N}
%%%%%%%%%%%%%%

%%%%%%%%% 
%this command gives the journal no. 
%
\jno{dri017}
%%%%%%%%%%%%%%

%%%%%%%%%%%%%%%%%
%call modified packages
% \usepackage{modamsthm,modnatbib}
\usepackage{modnatbib}
%%%%%%%%%%%%%%

%%%%%%%%%%%%%%%%%%
%default package
\usepackage{latexsym,amssymb,lastpage}
\usepackage{graphicx,amsfonts}
\usepackage{times,mathptmx,bm,amsmath,amsthm}
\usepackage{dcolumn}
%%%%%%%%%%%%%%%%%

%%%%%%%%%%%
%this files contains Theorem styles based in IMA JOURNALS
%
% %%%%%%%%%%%%%%%%%%%%proof
%
\renewenvironment{proof}[1][Proof]{\noindent\textit{#1. } }{\hfill$\square$}

%%%%%%%%%%%%%%%%Dcolumn 

%\setcounter{MaxMatrixCols}{30}

%{Created=Friday, October 17, 2003 10:31:49}
%{LastRevised=Sunday, August 22, 2004 16:40:08}

 \newtheoremstyle{theorem}{6pt}{6pt}{\rm}{}{\sffamily}{ }{ }{}
 \theoremstyle{theorem}
\newtheorem{theorem}{\sc Theorem}[section]

  \newtheoremstyle{thm}{6pt}{6pt}{\rm}{}{\sffamily}{ }{ }{}
 \theoremstyle{thm}

 \newtheoremstyle{lemma}{6pt}{6pt}{\rm}{}{\sffamily}{ }{ }{}
 \theoremstyle{lemma}
 
 \newtheoremstyle{lem}{6pt}{6pt}{\rm}{}{\sffamily}{ }{ }{}
 \theoremstyle{lem}

\newtheoremstyle{case}{6pt}{6pt}{\rm}{}{}{. }{ }{}
 \theoremstyle{case}

 \newtheoremstyle{statement}{6pt}{6pt}{\rm}{}{\sffamily}{ }{ }{}
\theoremstyle{statement}

 \newtheoremstyle{corollary}{6pt}{6pt}{\rm}{}{\sffamily}{ }{ }{}
 \theoremstyle{corollary}
 \newtheorem{corollary}{\sc Corollary}[section]
 
  \newtheoremstyle{defi}{6pt}{6pt}{\rm}{}{\sffamily}{ }{ }{}
 \theoremstyle{defi}

  \newtheoremstyle{cor}{6pt}{6pt}{\rm}{}{\sffamily}{ }{ }{}
 \theoremstyle{cor}

\newtheoremstyle{example}{6pt}{6pt}{\rm}{}{\sffamily}{ }{ }{}
\theoremstyle{example}

\newtheorem{proposition}[theorem]{\sc Proposition}

\newtheoremstyle{remark}{6pt}{6pt}{\rm}{}{\sffamily}{ }{ }{}
\theoremstyle{remark}
\newtheorem{remark}{\sc Remark}[section]

\newtheoremstyle{approximation}{6pt}{6pt}{\rm}{}{\sffamily}{ }{ }{}
\theoremstyle{approximation}

\newtheoremstyle{scheme}{6pt}{6pt}{\rm}{}{\sffamily}{ }{ }{}
\theoremstyle{scheme}

\newtheoremstyle{Algorithm}{6pt}{6pt}{\rm}{}{\sffamily}{ }{ }{}
\theoremstyle{Algorithm}

 \newtheoremstyle{Remark}{6pt}{6pt}{\rm}{}{\sffamily}{ }{ }{}
 \theoremstyle{Remark}

\newtheoremstyle{Lemma}{6pt}{6pt}{\rm}{}{\sffamily}{ }{ }{}
\theoremstyle{Lemma}

\newtheoremstyle{Assumption}{6pt}{6pt}{\rm}{}{\sffamily}{ }{ }{}
\theoremstyle{Assumption}

\newtheoremstyle{Proposition}{6pt}{6pt}{\rm}{}{\sffamily}{ }{ }{}
\theoremstyle{Proposition}

\newtheoremstyle{prop}{6pt}{6pt}{\rm}{}{\sffamily}{ }{ }{}
\theoremstyle{prop}

\newtheoremstyle{rem}{6pt}{6pt}{\rm}{}{\sffamily}{ }{ }{}
 \theoremstyle{rem}

\newtheoremstyle{hypo}{6pt}{6pt}{\rm}{}{\sffamily}{ }{ }{}
 \theoremstyle{hypo}

  \newtheoremstyle{Step}{6pt}{6pt}{\rm}{}{}{ }{ }{}
 \theoremstyle{Step}

 \newtheoremstyle{lema}{6pt}{6pt}{\rm}{}{\sffamily}{ }{ }{}
 \theoremstyle{lema}

%%%%%%%%%%%%%

\numberwithin{equation}{section}

%\newtheorem{theorem}{Theorem}[section]
%\newtheorem{corollary}{Corollary}
%\newtheorem*{main}{Main Theorem}
%\newtheorem{lemma}[theorem]{Lemma}
%\newtheorem{proposition}{Proposition}
%\newtheorem{conjecture}{Conjecture}
%\newtheorem*{problem}{Problem}
%\theoremstyle{definition}
%\newtheorem{definition}[theorem]{Definition}
%\newtheorem{remark}{Remark}
%\newtheorem*{notation}{Notation}

%%%---------------------------------------------------
\newcommand{\R}{\mathbb{R}}

\DeclareMathOperator{\lip}{Lip}

% Place all authors' names in [ ] shown as running head, Leave { } empty
% Please use `and' to connect the last two names if applicable
% Use FirstNameInitial.  MiddleNameInitial. LastName, or only last names of authors if there are too many authors
%\author[Montecinos, L\'opez-R\'ios, Ortega and Lecaros]{}

% It is required to enter 2010 MSC.
%\subjclass{Primary: 58F15, 58F17; Secondary: 53C35.}
% Please provide minimum  5 keywords.
% \keywords{Adjoint method, PDE-constrained optimal control, hyperbolic partial differential equations.}

% Email address of each of all authors is required.
% You may list email addresses of all other authors, separately.
 %\email{email1@smsu.edu}
 %\email{email2@aimSciences.org}
 %\email{email3@ece.pdx.edu}

% Put your short thanks below. For long thanks/acknowlegements,
%please go to the last acknowlegments section.
%\thanks{The first author is supported by the research project FONDECYT Postdoctorado 2016, number 3160743.}

% Add corresponding author at the footnote of the first page if it is necessary. 
% Plase add $^*$ adjacent to the corresponding author's name on the first page. 
% The example shown in this template is if the first author is the corresponding author.
%\thanks{$^*$ Corresponding author: gmontecinos@dim.uchile.cl}

\begin{document}
%% Place the running title of the paper with 40 letters or less in []
 %% and the full title of the paper in { }.
\title{A numerical procedure and unified formulation for the adjoint approach in hyperbolic PDE-constrained optimal control problems}

\author{
{\sc Gino I. Montecinos}$^*$\\[2pt]
Centro de Modelamiento Matem\'atico (CMM), Universidad de Chile, \\[6pt]
Beauchef 851, Torre Norte, Piso 7, Santiago, Chile \\[6pt] 
{\sc Juan C. L\'opez-R\'ios}\\[2pt]
Escuela de Ciencias Matem\'aticas y Tecnolog\'ia Inform\'atica, YACHAY TECH, \\[6pt]
San Miguel de Urcuqu\'i, Hacienda San Jos\'e s/n, Ecuador \\[6pt] 
{\sc Jaime H. Ortega}\\[2pt]
Centro de Modelamiento Matem\'atico (CMM) and Departamento de Ingenier\'ia
Matem\'atica, Universidad de Chile, \\[6pt]
Beauchef 851, Torre Norte, Piso 5, Santiago, Chile \\[6pt] 
{\sc Rodrigo Lecaros}\\[2pt]
Departamento de Matem\'atica, Universidad T\'ecnica Federico Santa Mar\'ia, \\[6pt]
Casilla 110-V, Valpara\'iso, Chile \\[6pt] 
}
\pagestyle{headings}
\markboth{Montecinos, L\'opez-R\'ios, Ortega and Lecaros}{\rm A unified formulation for the adjoint approach in hyperbolic PDE-constrained optimal control problems}

\maketitle

%----------------------------------------------------------------------------------------------------------------------------------------------

%The abstract of your paper
\begin{abstract}
{The present paper aims at providing a numerical strategy to deal with PDE-constrained optimization problems solved with the adjoint method. It is done through out a unified formulation of the constraint PDE and the adjoint model. The resulting model is a non-conservative hyperbolic system and thus a finite volume scheme is proposed to solve it. In this form, the scheme sets in a single frame both constraint PDE and adjoint model. The forward and backward evolutions are controlled by a single parameter $\eta$ and a stable time step is obtained only once at each optimization iteration. The methodology requires the complete eigenstructure of the system as well as the gradient of the cost functional. Numerical tests evidence the applicability of the present technique.}
{The adjoint method, PDE-constrained optimal control, hyperbolic conservation laws.}
\end{abstract}

%The title of your section 1
\section{Introduction}

We are concerned with PDE-constrained optimal control problems. In general optimization problems are made up of the following ingredients; i) state variables, ii) design parameters, iii) objectives or cost functionals and iv) constraints that candidate state and design parameters are required to satisfy.  The optimization problem is then to find state and design parameters that minimize the objective functional subject to the requirement that constraints are satisfied, see \cite{Andersson:1998a} for further details. PDE-constrained optimization problems are those where the constraint consists of partial differential equations. These optimization problems arise naturally in control theory and inverse problems. Are usually employed as a methodology to obtain an approximate solution often from numerical simulations, see \cite{Knopoff:2013a, Kang:2005a} to mention but a few. Mathematical properties of these problems, as existence and ill-possedness, have been widely studied in the literature, see for instance \cite{Abergel1990,Hinze:2008a} and they will not be of interest in this work. In this paper, we are interested on generating a procedure to obtain numerical solutions to be as general as possible.  There are several successful strategies to solve these types of minimization problems: i) the one-shot or Lagrange multiplier method ii) based on sensitivity equations and iii) based on adjoint equations. Of interest in this work is the strategy based on adjoint equations. The interested reader can find further information regarding approaches i) and ii) in \cite{Andersson:1998a, Gunzburger:2003a} and references therein. 
One-shot methods cannot be used to solve most practical optimization problems especially involving time-dependent partial differential equations. Methods ii) and iii) are iterative processes involving the gradient of the objective functional, since the functional involves the solution of partial differential equations the task of obtaining the gradient becomes cumbersome, thus a safe option is just the use of approximations of them. However, in these problems even the approximation of gradients is a very challenging issue. Sensitivity equations require the differentiation of the state equation with respect to the design parameters. The simple form to achieve this is using a difference quotient approximation similarly to finite difference approximation as suggested in \cite{Andersson:1998a}. This strategy can be successfully applied for steady state models and for a finite and small number of design parameters. It becomes prohibitive for time-dependent cases. In this sense, the adjoint method is the suitable alternative. 

The adjoint method consists of the use of a linearized equation derived from the constraint PDE, it provides a new set of adjoint states, and the gradient of the cost functional can be completely determined in terms of the state variables, design parameters and adjoint states. The equation for the state variables, as well as for the adjoint parameter, is solved only once. It makes the adjoint method to be more efficient than the sensitivity method, see \cite{Hinze:2008a} for a detailed discussion about the performance of both approaches.  In this work, we are going to consider the adjoint approach for solving optimization problems related to hyperbolic type PDE's. Additionally, for hyperbolic equations, the design parameters, which are parameters of models, can be incorporated into the governing equation as a new variable, and then the problem of finding parameters can be cast into a problem of finding the initial condition.

The constraint PDE normally is written in a conservation  form  and thus suitable schemes are those corresponding to the family of conservative methods. On the other hand, the adjoint models are linearization of the constraint PDE, but this is a quasilinear type model for adjoint variables and thus they are also of hyperbolic type, but these models are non-conservative in a mathematical sense. It is a very important issue from a numerical point of view, the right description of the weave propagation has to be dealt in the frame of the path conservative methods for partial differential equations involving non-conservative products, see \cite{Pares:2006a, CastroM:2006a} for a detailed study of non-conservative partial differential equations with a particular emphasis on hyperbolic types.

As the constraint PDE and the adjoint model are both hyperbolic, they can be written as a unified model incorporating both the constraint PDE and the adjoint model in a single system of balance laws. In this paper, we back up on the unified formulation to set a numerical methodology able to simultaneously deal with the conservative form of constraint PDE and the adjoint model. The aim is to provide a general frame to deal with optimization problems through the adjoint method. Moreover, the present approach works even if the constraint PDE is non-conservative. In the literature, there exist several works dealing with minimization considering hyperbolic conservation laws, see  \cite{James:1999a, James:2008a, Castro:2008a-1, Lecaros:2014a,Ulbrich:2002a} to mention but a few.  However, to best of our knowledge, the present approach has not been reported previously and this incorporates in a natural way the non-conservative structure of the adjoint model.

On the other hand, regarding the solution of hyperbolic conservation laws, one of the successful methods are those based on the finite volume approach, which makes use of numerical fluxes at the interfaces of computational cells, providing a compact one-step evolutionary formula.  The accuracy of the approximation via this approach depends on the degree of resolution of numerical fluxes. One approach for achieving high resolution is by mean of the use of the so called Generalized Riemann Problem (GRP), see \cite{Castro:2008a,Montecinos:2012a} for further details and comparison among existing GRP solvers. These are building blocks of a class of high-order numerical methods known as ADER schemes \cite{Toro:2001c,Toro:2002a}. The numerical scheme presented in this work belong to the class of ADER methods. The scheme uses a modified version of the Osher-Solomon Riemann solver \cite{Osher:1982a} presented by Dumber and Toro, \cite{Dumbser:2010b}, which is able to account for the non-conservative terms, using the so-called path conservative approach, it is a second order method for hyperbolic problems in conventional finite volume setting and it inherits the properties of stability, well-balance, consistency and robustness. However, the implementation has to be adapted for accounting the correct wave propagation for the evolution of both, the PDE constraint and the adjoint model.  

In order to assess the performance of the present method, we carry out comparisons between the present scheme and a conventional method based on \cite{Lellouche:1994a} for solving PDE-constrained optimization problems. We adapt the scheme in \cite{Lellouche:1994a} implementing an upwind type scheme for the solution of the constraint and for the adjoint model, the same finite difference approach is kept but it is carried out in an explicit time evolution to be consistent with the present method which is globally explicit in time.

The rest of the paper is organized as follows. In Section \ref{PDE-constrint} the unified formulation of the primal dual strategy for a general system of conservation laws is presented, after the formal derivation of the adjoint system. We also state a general theorem providing the existence of classical solutions for the unified system and study a related optimal control problem. In Section \ref{section:numericalmethod}, a numerical scheme for solving conservative and non-conservative hyperbolic laws is presented and, as a consequence, a scheme for solving the unified primal-adjoint system. In Section \ref{sec:numerical-examples} some specific numerical examples are presented using the numerical scheme from Section \ref{section:numericalmethod}. Finally in Section \ref{sec:conclusions} some conclusions are drawn.

\section{PDE-constrained optimization problem}\label{PDE-constrint}

The purpose of this section is to provide a unified formulation of the primal dual strategy, raised from the study of certain optimal control problems concerning the finding of a parameter on a general conservation law.

First, we define the hyperbolic PDE to be considered and we prove that the finding of a parameter on this system is reduced to find an initial condition. Next we compute the adjoint system and set the unified formulation. Then we provide some conditions to establish the existence and uniqueness of classical solutions of the unified system.  Finally we set the control problem and prove the existence of an optimal solution.

%In this section, we set up the framework for optimization problems with constraints given by Partial Differential Equations (PDE-constrained). In this work, we are interested in minimization problems associated with conservation laws.
%
We consider the system
\begin{eqnarray}
\label{eq:primal:0}
\left.
\begin{array}{c}
\partial_t \mathbf{U} + \partial_x \mathbf{R}(\mathbf{ U},\mathbf{b} ) = \mathbf{L} ( \mathbf{U,\mathbf{b}} ) + \tilde{\bf B}(\mathbf{U},\mathbf{b}) \partial_x \mathbf{b} \;, \;t\in [0,T]\;,\\
\mathbf{U}(x,0)=
\mathbf{H}(x) \;,
\end{array}
\right\}
\end{eqnarray}
where $T>0$ is a finite time, $\mathbf{H}( x)\in \mathbb{R}^{m}$ is a prescribed initial condition, $\mathbf{U}\in \mathbb{R}^{m}$ is a vector of states, $\mathbf{R}(\mathbf{ U},\mathbf{b}  )\in \mathbb{R}^{m}$ a flux function, $\mathbf{L} ( \mathbf{U},\mathbf{b} )\in \mathbb{R}^{m} $ is a source function, $\mathbf{b}(x)\in \mathbb{R}^{n}$ is a vector accounting for model parameters and the expression $\tilde{\bf B}(\mathbf{U},\mathbf{b}) \partial_x \mathbf{b}$ represents source terms which may also include the influence of parameter derivatives. Here, parameters $\mathbf{b}$ only depend on space. Since one main goal is to provide a strategy for finding $\mathbf{b}$ as the minimum of a given functional $J$, let us write $J$ as

\begin{equation}
\label{eq:functional:0}
J(\mathbf{U},\mathbf{b})=\frac{1}{2}\int_0^T\int_{\R}|\psi(\mathbf{U},\mathbf{b})-\bar{\psi}|^2dxdt+\frac{\alpha}{2}\|\mathbf{b}\|_{H^k}^2,
\end{equation}
where $\psi(\mathbf{U},\mathbf{b}) $ is some scalar field, representing some measurement operator, similarly $\bar{\psi}$ is some available measurement, $\alpha$ is a non-negative constant and $k$ is an exponent to be determined.

In an optimization setting we identify the state variables $\mathbf{U}$, design parameters $\mathbf{b}$, objectives or cost functional given by (\ref{eq:functional:0}) and  constraints consisting of (\ref{eq:primal:0}) that candidate state $\mathbf{U}$  and design parameters  $\mathbf{b}$ are required to satisfy.

Notice that, parameters $ \mathbf{b}$ only depend on space, then $ \partial_t \mathbf{b} (x)  = 0$. So, we can include $\mathbf{b}$ as a new variable of system  (\ref{eq:primal:0}), as follows

\begin{eqnarray}
\label{eq:primal:1}
\left.
\begin{array}{c}
\partial_t \mathbf{U} + \partial_x \mathbf{R}(\mathbf{ U},\mathbf{b} ) = \mathbf{L} ( \mathbf{U,\mathbf{b}} ) +  \tilde{\bf B} (\mathbf{U},\mathbf{b})\partial_x \mathbf{b} \;,\\
\\
\partial_t  \mathbf{b}= \mathbf{0}\;.
\end{array}
\right\}
\end{eqnarray}

So, written in the vector form,  (\ref{eq:primal:1}) becomes 
\begin{eqnarray}
\label{eq:primal:2}
\begin{array}{c}

\partial_t 
\tilde{\mathbf{U}} + 
\partial_x 

\tilde{\mathbf{R}} (\tilde{\mathbf{ U}}) 
+
\tilde{\bf M}  \partial_x  \tilde{\mathbf{U}} 
 = 

\tilde{ \mathbf{L} }( \tilde{\mathbf{U}} )
  
\;,
\end{array}
\end{eqnarray}
where
\begin{eqnarray}
\label{eq:primal:3}
\begin{array}{c}

\tilde{\mathbf{U}} = 
\left[
\begin{array}{c}
\mathbf{U} \\
\mathbf{b}
\end{array}
\right]
\;,
\;
\tilde{\mathbf{R}}(\tilde{\mathbf{U}})
=
\left[
\begin{array}{c}
\mathbf{R}(\mathbf{ U},\mathbf{b} ) \\
\mathbf{0}
\end{array}
\right]\;, 
\\

\tilde{\mathbf{L} }(\tilde{\mathbf{U}}) = 
\left[
\begin{array}{c}
 \mathbf{L} ( \mathbf{U},\mathbf{b} ) \\
 \mathbf{0}
\end{array}
\right] 
\;, \;
\tilde{  \mathbf{M} } (\tilde{\mathbf{U}}) = 
\left[
\begin{array}{cc}
\mathbf{0} & - \tilde{\bf B} (\mathbf{U},\mathbf{b})\\
\mathbf{0} & \mathbf{0} \\
\end{array}
\right] .

\end{array}
\end{eqnarray}

In this new system, $\mathbf{b}(x)$ is completely fixed once an initial condition is provided, so the task of finding parameters in a model system as  (\ref{eq:primal:0}) is equivalent to find the initial condition of the enlarged model system (\ref{eq:primal:1}),
henceforth referred to as {\it constraint PDE}. From now on, instead of system \eqref{eq:primal:0} we are going to consider system
\begin{eqnarray}
\label{eq:primal:01}
\left.
\begin{array}{c}
\partial_t 
\tilde{\mathbf{U}} + 
\partial_x 
\tilde{\mathbf{R}} (\tilde{\mathbf{ U}}) 
+
\tilde{\bf M}  \partial_x  \tilde{\mathbf{U}} 
 = 
\tilde{ \mathbf{L} }( \tilde{\mathbf{U}} ) 
\;, \;t\in [0,T]\;,\\
\tilde{\mathbf{U}}(x,0)=
\tilde{\mathbf{H}}(x) \;,
\end{array}
\right\}
\end{eqnarray}
where $\tilde{\mathbf{H}}(x)=[\mathbf{H},\mathbf{0}]^T$. The following result provides the conditions on system (\ref{eq:primal:0}) which turns system (\ref{eq:primal:01}) to be hyperbolic.

\begin{proposition}
\label{Prop1}
Let (\ref{eq:primal:0}) be a hyperbolic system in the variable $\mathbf{U}$, with $ \{\lambda_1, ...,\lambda_m\}$ the corresponding eigenvalues of $\partial \mathbf{R} / \partial \mathbf{U} $, where $\lambda_i \neq 0$. Then (\ref{eq:primal:01}) is also a hyperbolic system in the variables $ \tilde{\mathbf{R}}(\tilde{\mathbf{U}}) = [\mathbf{U},\mathbf{b}]^T$.
The set of eigenvalues is given by 
$$\{\lambda_1, ...,\lambda_m, \underbrace{0,...,0}_{n\;times} \}$$ 
and the corresponding eigenvectors are given by
$$\{ [\mathbf{v}_1, \mathbf{0}_{n}]^T, ...,[\mathbf{v}_m, \mathbf{0}_{n}]^T, [\tilde{\bf V}_1, \mathbf{e}_{1} ]^T,...,[\tilde{\bf V}_n, \mathbf{e}_{n} ]^T \} \;,$$ 
with
$\tilde{\bf V}_j = -  (\partial \mathbf{R} / \partial \mathbf{U} )^{-1}( \partial \mathbf{R} / \partial \mathbf{b} - \tilde{\bf B} )_j ,$
where $( \partial \mathbf{R} / \partial \mathbf{b} - \tilde{\bf B} )_j$ is the $j-th $ column vector of the matrix $( \partial \mathbf{R} / \partial \mathbf{b} - \tilde{\bf B} ).$ 
Here, $\mathbf{v}_i$ is an eigenvector associated with $\lambda_i$, $\mathbf{0}_m$ is the zero vector in $\mathbb{R}^{m}$, $\mathbf{e}_i$ is the $it$h canonical vector of $\mathbb{R}^{n}$. 
\end{proposition}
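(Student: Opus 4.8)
The plan is to reduce the statement to a linear-algebra fact about the \emph{system matrix} of the quasilinear, non-conservative system \eqref{eq:primal:01}. Rewriting \eqref{eq:primal:01} in the form $\partial_t \tilde{\mathbf{U}} + \tilde{\mathbf{A}}(\tilde{\mathbf{U}})\,\partial_x \tilde{\mathbf{U}} = \tilde{\mathbf{L}}(\tilde{\mathbf{U}})$ with $\tilde{\mathbf{A}}(\tilde{\mathbf{U}}) := \partial \tilde{\mathbf{R}}/\partial\tilde{\mathbf{U}} + \tilde{\mathbf{M}}$, hyperbolicity in the variable $\tilde{\mathbf{U}}$ means (in the sense appropriate to non-conservative systems, consistent with the path-conservative framework cited above) that $\tilde{\mathbf{A}}$ is $\mathbb{R}$-diagonalizable. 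Using the block definitions in \eqref{eq:primal:3}, the first step is the direct computation
\[
\tilde{\mathbf{A}} \;=\; \begin{pmatrix} \partial \mathbf{R}/\partial\mathbf{U} & \partial \mathbf{R}/\partial\mathbf{b} \\ \mathbf{0} & \mathbf{0} \end{pmatrix} + \begin{pmatrix} \mathbf{0} & -\tilde{\mathbf{B}} \\ \mathbf{0} & \mathbf{0} \end{pmatrix} \;=\; \begin{pmatrix} \partial \mathbf{R}/\partial\mathbf{U} & \partial \mathbf{R}/\partial\mathbf{b} - \tilde{\mathbf{B}} \\ \mathbf{0} & \mathbf{0} \end{pmatrix},
\]
which is block upper triangular.

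Second, I would read the spectrum off the triangular structure: $\det(\tilde{\mathbf{A}} - \lambda\mathbf{I}) = \det(\partial\mathbf{R}/\partial\mathbf{U} - \lambda\mathbf{I}_m)\cdot(-\lambda)^n$, so the eigenvalues are exactly $\lambda_1,\dots,\lambda_m$ together with $0$ repeated $n$ times. The eigenvectors are then exhibited by inspection. For each $\lambda_i$, a one-line block multiplication gives $\tilde{\mathbf{A}}[\mathbf{v}_i,\mathbf{0}_n]^T = [(\partial\mathbf{R}/\partial\mathbf{U})\mathbf{v}_i,\mathbf{0}_n]^T = \lambda_i[\mathbf{v}_i,\mathbf{0}_n]^T$. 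For the zero eigenvalue, applying $\tilde{\mathbf{A}}$ to $[\tilde{\mathbf{V}}_j,\mathbf{e}_j]^T$ produces $\big[(\partial\mathbf{R}/\partial\mathbf{U})\tilde{\mathbf{V}}_j + (\partial\mathbf{R}/\partial\mathbf{b}-\tilde{\mathbf{B}})_j,\ \mathbf{0}_n\big]^T$, which vanishes precisely when $\tilde{\mathbf{V}}_j = -(\partial\mathbf{R}/\partial\mathbf{U})^{-1}(\partial\mathbf{R}/\partial\mathbf{b}-\tilde{\mathbf{B}})_j$; the inverse exists because $\lambda_i\neq 0$ for every $i$.

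Third, I would check that these $m+n$ vectors form a basis of $\mathbb{R}^{m+n}$, which is what upgrades ``real spectrum'' to ``hyperbolic''. The argument is componentwise: the first $m$ eigenvectors vanish in their last $n$ components, whereas the lower blocks of the remaining $n$ eigenvectors are $\mathbf{e}_1,\dots,\mathbf{e}_n$; hence any vanishing linear combination must separately annihilate a combination of the $\mathbf{v}_i$ (linearly independent by hyperbolicity of the original system in $\mathbf{U}$) and a combination of the $\mathbf{e}_j$ (the canonical basis of $\mathbb{R}^n$), forcing all coefficients to be zero. This yields the claimed eigenstructure and, in particular, diagonalizability of $\tilde{\mathbf{A}}$.

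I do not anticipate a genuine obstacle; the proof is essentially a block-matrix computation. The one place that requires care is the final independence step, and it is exactly there that the hypothesis $\lambda_i\neq 0$ is essential: it is used to invert $\partial\mathbf{R}/\partial\mathbf{U}$ and, implicitly, to guarantee that the zero eigenvalue has geometric multiplicity exactly $n$ (if some $\lambda_i$ vanished, the vectors $[\mathbf{v}_i,\mathbf{0}_n]^T$ and $[\tilde{\mathbf{V}}_j,\mathbf{e}_j]^T$ could become dependent and diagonalizability could fail). I would also flag at the outset the notion of hyperbolicity being used for the non-conservative system \eqref{eq:primal:01}, namely $\mathbb{R}$-diagonalizability of $\tilde{\mathbf{A}} = \partial\tilde{\mathbf{R}}/\partial\tilde{\mathbf{U}} + \tilde{\mathbf{M}}$, so that the statement is unambiguous.
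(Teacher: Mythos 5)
Your proof is correct and follows essentially the same route as the paper's: compute the block upper-triangular system matrix $\partial\tilde{\mathbf{R}}/\partial\tilde{\mathbf{U}}+\tilde{\mathbf{M}}$, read off the spectrum $\{\lambda_1,\dots,\lambda_m,0,\dots,0\}$ from the factorized characteristic polynomial, and exhibit the eigenvectors blockwise, using invertibility of $\partial\mathbf{R}/\partial\mathbf{U}$ (guaranteed by $\lambda_i\neq 0$) to solve for $\tilde{\mathbf{V}}_j$. Your explicit verification of the linear independence of the $m+n$ eigenvectors is a point the paper's proof only asserts implicitly, so your write-up is if anything slightly more complete.
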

\begin{proof}
The Jacobian matrix of system (\ref{eq:primal:01}) is given by
\begin{eqnarray}
\begin{array}{c}
\mathbf{J_R} :=  
\partial \tilde{\mathbf{R}} / \partial \tilde{\mathbf{U}}+
\tilde{\bf M} 
=
\left[
\begin{array}{cc}
\partial \mathbf{R} / \partial \mathbf{U} & ( \partial \mathbf{R} / \partial \mathbf{b} - \tilde{\bf B} )\\
\mathbf{0} & \mathbf{0}
\end{array}
\right]\;.
\end{array}
\end{eqnarray}

Therefore
\begin{eqnarray}
\begin{array}{c}
det (  \mathbf{J_R} - \lambda \mathbf{I}_{n+m} )  = - \lambda^n det(\partial \mathbf{R} / \partial \mathbf{U} - \mathbf{I}_{m} )  = -\lambda^n p(\lambda) \;, 
\end{array}
\end{eqnarray}
where    $\mathbf{I}_{n+m} $ and $\mathbf{I}_{m} $ are the identity matrix in $\mathbb{R}^{n+m}$ and $\mathbb{R}^{m}$, respectively.  Here, $p(\lambda)$ is the characteristic polynomial of  $\partial \mathbf{R} / \partial \mathbf{U} - \mathbf{I}_{m} $ which has $m$ roots, because (\ref{eq:primal:0}) is hyperbolic. This proves
$$\{\lambda_1, ...,\lambda_m, \underbrace{0,...,0}_{n\;times} \}$$  are the eigenvalues of $ \mathbf{J_R}$.

To complete the proof, we must obtain a set of $m+n$ linearly independent eigenvectors. That means, for each $\mu \in  \{\lambda_1, ...,\lambda_m, \underbrace{0,...,0}_{n\;times} \} $ we look for $\mathbf{W}$ such that 
\begin{eqnarray}
\begin{array}{c}
\mathbf{J_R}  \mathbf{W}  = \mu   \mathbf{W}\;.
\end{array}
\end{eqnarray}

Here,  we obtain eigenvectors of the form $ \mathbf{W} = [\tilde{\bf V}, \tilde{\bf W}]^T$, so in a block-wise fashion we look for vectors 
$\tilde{\bf V}$ and  $\tilde{\bf W}$ such that
\begin{eqnarray}
\label{eq:eigenstruct:0}
\left.
\begin{array}{c}
(\partial \mathbf{R} / \partial \mathbf{U} )\tilde{\bf V} +( \partial \mathbf{R} / \partial \mathbf{b} - \tilde{\bf B} ) \tilde{\bf W}
= 
\mu \tilde{\bf V} \;, \\
\mathbf{0} = \mu \tilde{\bf W}  \;.

\end{array}
\right\}
\end{eqnarray}

Notice that, setting $\mu  = \lambda_i$ and $ \tilde{\bf V}  = {\bf v}_i $, then (\ref{eq:eigenstruct:0}) is satisfied for 
$  \tilde{\bf W} = \mathbf{0}_{n}$. It provides $m$ independent vectors $\{ [\mathbf{v}_1, \mathbf{0}_{n}]^T, ...,[\mathbf{v}_m, \mathbf{0}_{n}]^T \}$.  On the other hand, if $\mu = 0$ we obtain 
\begin{eqnarray}
\label{eq:eigenstruct:1}
\begin{array}{c}
(\partial \mathbf{R} / \partial \mathbf{U} )\tilde{\bf V} +( \partial \mathbf{R} / \partial \mathbf{b} - \tilde{\bf B} ) \tilde{\bf W}
=
\mathbf{0}  \;.

\end{array}
\end{eqnarray}

If we set $\tilde{\bf W} = \mathbf{e}_j$,  then
\begin{eqnarray}
\label{eq:eigenstruct:2}
\begin{array}{c}
(\partial \mathbf{R} / \partial \mathbf{U} )\tilde{\bf V} = - ( \partial \mathbf{R} / \partial \mathbf{b} - \tilde{\bf B} )_j
  \;,
\end{array}
\end{eqnarray}
where $( \partial \mathbf{R} / \partial \mathbf{b} - \tilde{\bf B} )_j$ is the $j-th $ column vector of the matrix $( \partial \mathbf{R} / \partial \mathbf{b} - \tilde{\bf B} ).$  Since $ \partial \mathbf{R} / \partial \mathbf{U} $ is invertible, there exists $\tilde{\bf V}_j\in \mathbb{R}^m$ such that 
$$ \tilde{\bf V}_j = - (\partial \mathbf{R} / \partial \mathbf{U} )^{-1}( \partial \mathbf{R} / \partial \mathbf{b} - \tilde{\bf B} )_j \;.$$

Then the proof holds.  
\end{proof}
%%

%%
%\begin{eqnarray}
%\label{eq:primal:1}
%\left.
%\begin{array}{c}
%\partial_t \mathbf{U} + \partial_x \mathbf{R}(\mathbf{ U},\mathbf{b} ) = \mathbf{L} ( \mathbf{U,\mathbf{b}} ) + \sum_{j=1}^{m} \mathbf{e}_j \tilde{\bf R}_j(\mathbf{U},\mathbf{b})^T \partial_x \mathbf{b} \;,\\
%\\
%\partial_t \mathbf{b} = \mathbf{0}\.
%\end{array}
%\right\}
%\end{eqnarray}
%%
%%

\begin{remark}
Notice that the previous result ensure that extended system (\ref{eq:primal:01}) is hyperbolic, whenever system (\ref{eq:primal:0}) is. However, it requires (\ref{eq:primal:0}) to have an invertible Jacobian matrix. It is not guaranteed in the general case. However, the cases of interest in this study do so. Otherwise, the verification has to be done case by case.
\end{remark}

As a consequence, the problem of finding a parameter vector for system $(\ref{eq:primal:0})$ becomes an inverse problem where the aim is to find an initial condition for $\mathbf{b}$ which is a conventional variable in a system of hyperbolic balance laws.

\subsection{The adjoint method for solving PDE-constrained optimization problems}
\label{Section2.1}
%

%We concern the optimal control problem of finding a sought parameter $\mathbf{b}$ as the minimum of a cost functional. We employed a conventional optimization method consisting of an iterative descent process, where the descent direction is obtained from the gradient of the cost function to be minimized. The adjoint method is a successful strategy to find the gradient for the cost function (\ref{eq:functional:0}), see \cite{Hinze:2008a}. In a brief description, from the constraint PDE, we derive a new partial differential equation, which is in some sense a linearization of the constraint PDE, this new model is known as the adjoint model and it describes the evolution of the so-called adjoint state. For the sake of completeness, we detail the formal derivation of the adjoint model for finding a sought parameter $\mathbf{b}$ and thus the formal framework of the adjoint method. The strategy presented here is straightforward extended to other cases, as that of finding the initial condition.  To obtain the adjoint method we formally differentiate the constraint PDE with respect to variations of $\mathbf{b}$. Let us write
%

In this section we derive a system of first order optimality conditions for system \eqref{eq:primal:01}, subject to the functional \eqref{eq:functional:0} with $\alpha = 0$. Even though these calculations are formal, we will provide the corresponding analysis in sections \ref{ExSol} and \ref{OptCont}. At this point our main interest is to present the unified primal-dual formulation and the main issues related with.
Since it is well known for this strategy \cite{Hinze:2008a}, the so-called adjoint state is, in some sense, a linearization of the constraint PDE. The dependence of the sought parameter $\mathbf{b}$ in \eqref{eq:primal:01} is implicit but, as was mentioned above, can be seen as the tracking of an initial condition.

As we did in Proposition \eqref{Prop1}, for $i=1:m+n$, let us write \eqref{eq:primal:01} as
\begin{eqnarray}
\label{eq:adjoint:-2}
\left.
\begin{array}{c}
\partial_t \tilde{\bf U}_i + \sum_{j = 1}^{m+n} \mathbf{J_R}_{i,j}\partial_x \tilde{\bf U}_j =  \tilde{\bf L}_i \;, \; t\in [0,T]\;,\\
\tilde{\mathbf{U}}_i(x,0)=
\tilde{\mathbf{H}}_i(x) \;.
\end{array}
\right\}
\end{eqnarray}

Then
\begin{eqnarray}
\label{eq:adjoint:-1}
\begin{array}{c}
\partial_t \delta\tilde{\bf U}_i
 + \sum_{j = 1}^{m+n} 
 \mathbf{J_R}_{i,j}\partial_x \delta\tilde{\bf  U}_j 

+ \sum_{j = 1}^{m+n} \sum_{k = 1}^{m+n}

\frac{ \partial  \mathbf{J_R}_{i,j}  }{\partial_x \tilde{\mathbf{U}}_k}  \delta\tilde{\bf  U}_k  \partial_x \tilde{\bf U}_j
 
 =  \\
 \sum_{k = 1}^{m+n}  
 \frac{ \partial  \tilde{\bf  L}_k }{ \partial  \bf \tilde{U}_k} \delta\tilde{\bf  U}_k \;.
 
\end{array}
\end{eqnarray}

If we denotes the inner product in $L^2(\R)$ by $\langle\cdot,\cdot\rangle$ and multiply last equation by functions $P_i$, such that $ lim_{|x|\rightarrow \infty} P_i(x,t) =  0$, $\forall \;t$, after integrating in space and time, we obtain
\begin{eqnarray}
\begin{array}{c}
\displaystyle \int_{0}^{T}
\langle \partial_t \delta\tilde{\bf U}_i ,P_i \rangle

 + \sum_{j = 1}^{m+n} 
\displaystyle \int_{0}^{T} 
\langle \partial_x \delta\tilde{\bf U}_j , \mathbf{J_R}_{i,j} P_i \rangle

\\

+ \sum_{j = 1}^{m+n} \sum_{k = 1}^{m+n}

\displaystyle \int_{0}^{T} 
\langle \delta\tilde{\bf U}_k ,  \frac{ \partial  \mathbf{J_R}_{i,j}  }{\partial \tilde{\mathbf{U}}_k} \partial_x \tilde{\bf U}_j P_i \rangle
 
 =  \\
 \sum_{k = 1}^{m+n}  
\displaystyle \int_{0}^{T} 
\langle  \delta\tilde{\bf U}_k ,   \frac{ \partial  \tilde{\bf  L}_k }{ \partial \bf \tilde{U}_k}  P_i \rangle \;.
 
\end{array}
\end{eqnarray}

Summing on indices $i$, yields
\begin{eqnarray}
\begin{array}{c}
\displaystyle \int_{0}^{T}
\sum_{i=1}^{m+n} 
\langle \partial_t \delta\tilde{\bf U}_i ,P_i \rangle

 - \sum_{j = 1}^{m+n} 
\displaystyle \int_{0}^{T} 
\langle  \delta\tilde{\bf U}_j ,  \sum_{i=1}^{m+n}  \partial_x (   \mathbf{J_R}_{i,j} P_i ) \rangle

\\

+ \sum_{j = 1}^{m+n} \sum_{k = 1}^{m+n}

\displaystyle \int_{0}^{T} 
\langle \delta\tilde{\bf U}_k ,  \sum_{i=1}^{m+n}  \frac{ \partial  \mathbf{J_R}_{i,j}  }{\partial \tilde{\mathbf{U}}_k}    \partial_x \tilde{\bf U}_j P_i \rangle
 
 =  \\
 \sum_{k = 1}^{m+n}  
\displaystyle \int_{0}^{T} 
\langle  \delta\tilde{\bf U}_k ,  \sum_{i=1}^{m+n}   \frac{ \partial  \tilde{\bf  L}_k }{ \partial \bf \tilde{U}_k}  P_i \rangle \;.
 
\end{array}
\end{eqnarray}

Then integrating by parts, we obtain
\begin{eqnarray}
\begin{array}{c}

\displaystyle 
\sum_{k=1}^{m+n} 
\langle \delta\tilde{\bf U}_k ,P_k \rangle |_{0}^{T}
-

\displaystyle \int_{0}^{T}
\sum_{k=1}^{m+n} 
\langle \delta\tilde{\bf U}_k , \partial_t P_k \rangle

 - \sum_{k = 1}^{m+n} 
\displaystyle \int_{0}^{T} 
\langle \delta\tilde{\bf U}_k ,  \sum_{i=1}^m      \mathbf{J_R}_{i,k}  \partial_x P_i  \rangle

\\

- \sum_{k = 1}^{m+n}
\displaystyle \int_{0}^{T} 
\langle \delta\tilde{\bf U}_k ,  \sum_{i=1}^m  \mathbf{D}_{k,j}    \partial_x \tilde{\bf U}_j \rangle

 = 
 \sum_{k = 1}^{m+n}  
\displaystyle \int_{0}^{T} 
\langle \delta\tilde{\bf U}_k ,  \sum_{i=1}^{m+n}   \frac{ \partial  \tilde{\bf  L}_k }{ \partial \bf \tilde{U}_k}  P_i \rangle \;,
 
\end{array}
\end{eqnarray}
where 
\begin{eqnarray}
\label{eq:defD}
\begin{array}{c}

\mathbf{D}_{k,j} =  \sum_{i=1}^{m+n}  \biggl( 
 \frac{ \partial  \mathbf{J_R}_{i,k}  }{\partial \tilde{\mathbf{U}}_j} 
 -
\frac{ \partial  \mathbf{J_R}_{i,j}  }{\partial \tilde{\mathbf{U}}_k} 
 \biggr) P_i \;.

\end{array}
\end{eqnarray}

On the other hand, $\delta J(\mathbf{b}) = \displaystyle  \int_{0}^{T} \int_{-\infty }^{\infty} (\psi(\mathbf{U},\mathbf{b}) - \bar{\psi}  ) ( \nabla_{\mathbf{U}} \psi ^T \delta \mathbf{U} + \nabla_{\mathbf{b}} \psi ^T \delta \mathbf{b} )$ or
$$
\delta J(\mathbf{b}) =  \sum_{k = 1}^{m+n}   \displaystyle \int_{0}^{T} 
\langle 
\delta\tilde{\bf U}_k , (\psi - \bar{\psi}  ) \frac{ \partial \psi }{ \partial \tilde{\bf  U}_k }
\rangle \;.
$$

Moreover
\begin{eqnarray}
\begin{array}{c}
\delta J =  \sum_{j=1}^{n}  \langle   \nabla J, \delta \mathbf{b} \rangle \;,
\end{array}
\end{eqnarray}
so, by identifying terms we obtain
\begin{eqnarray}
\label{FormalGrad:eq-1}
\begin{array}{c}
 \nabla J_j (x) = P_{m+j} (x,0) \;,
\end{array}
\end{eqnarray}
for all $j=1,...,n$. Imposing  $ P_i(x,T) = 0 $ for $i=1,...,n+m$ and $P_i(x,0) = 0 $ for $i=1,...,m$ we obtain the adjoint problem
\begin{eqnarray}
\begin{array}{c}
\partial_t P_k + \sum_{i=1}^{m+n}      \mathbf{J_R}_{i,k}  \partial_x P_i + \sum_{i=1}^{m+n}  D_{k,j}    \partial_x \tilde{\bf U}_j
=
\sum_{i=1}^{m+n}   \frac{ \partial  \tilde{\bf  L}_i }{ \partial \bf \tilde{U}_k}  P_i 
 +  (\psi - \bar{\psi}  ) \frac{ \partial \psi }{ \partial \tilde{\bf  U}_k } \;.
\end{array}
\end{eqnarray}

In a matrix form
\begin{eqnarray}
\label{eq:adjoint:1}
\begin{array}{c}
\partial_t \mathbf{P}  + \mathbf{J_R}^T \partial_x \mathbf{P}  =  \tilde{S}(\mathbf{ \tilde{U},P})
 \;, \\
 \mathbf{P}( x,T) = \mathbf{0} \;,
\end{array}
\end{eqnarray}
where $ \frac{1}{2}\nabla_{\tilde{\mathbf{U}}} ( (\psi - \bar{\psi}  )^2 )_{k} =  (\psi - \bar{\psi}  ) \frac{ \partial \psi }{ \partial \tilde{\bf  U}_k } $ and  $\mathbf{P} = [P_1,...,P_{n+m}]^T$. The source term in this case has the form
$$
 \tilde{S}(\mathbf{ \tilde{U},P}) = \frac{\partial \tilde{L}}{\partial \mathbf{\tilde{ U}}}^T \mathbf{P}  +  \frac{1}{2}\nabla_{\tilde{\mathbf{U}}} ( (\psi - \bar{\psi}  )^2 )  -  \mathbf{D} \partial_x \mathbf{\tilde{ U}} \;.
$$

In the sequel, we shall refer to $\mathbf{P}$ as {\it the adjoint state}.
Notice that  system (\ref{eq:adjoint:1}) is also hyperbolic, with the same eigenvalues that system (\ref{eq:adjoint:-2}).  It is because the Jacobian matrix of the system (\ref{eq:adjoint:1}) is $\mathbf{J}_R^T.$  Moreover, notice that system  (\ref{eq:adjoint:1}), henceforth referred to as {the adjoint model}, evolves back in time from $T$ to $0$.

\subsection{The unified formulation}

In this section, we construct a model system which accounts for both the PDE constraint and the adjoint model, aimed to construct a numerical method, able to solve simultaneously both systems including the treatment of non-conservative products. So, let us consider the following unified primal-dual system
\begin{eqnarray}
\label{eq:unifiedsystem:1}
\begin{array}{c}
\partial_t \mathbf{Q} + \partial_x \mathbf{F}(\mathbf{Q} ) + \mathbf{B}(\mathbf{Q}) \partial_x \mathbf{Q} = \mathbf{S} ( \mathbf{Q} ) \;,\\
\end{array}
\end{eqnarray}
where
\begin{eqnarray}
\begin{array}{c}
\mathbf{Q} = 
\left(  
\begin{array}{c}
\mathbf{\tilde{U} } \\
\mathbf{P}
\end{array}
\right)\;, \;

\mathbf{F}(\mathbf{Q} )= 
\left(  
\begin{array}{c}
\mathbf{\tilde{R} }(\mathbf{ \tilde{U} } ) \\
\mathbf{0}
\end{array}
\right) \;,
\\
\mathbf{B}(\mathbf{Q}) = 
\left(  
\begin{array}{cc}
\mathbf{0} & \mathbf{0} \\
\mathbf{D} & \mathbf{J_R}^T
\end{array}
\right) \;, \;

\mathbf{S} ( \mathbf{Q})
=
\left(  
\begin{array}{c}
\mathbf{ \tilde{L} } \\
- \frac{ \partial \mathbf{\tilde{L}}}{\partial \mathbf{\tilde{U} } }^T  \mathbf{P} 
+  \frac{1}{2}\nabla_{\tilde{\mathbf{U}}} ( (\psi - \bar{\psi}  )^2 )
\end{array}
\right)\;.
\end{array}
\end{eqnarray}
Notice that the state $\mathbf{Q}\in\R^{2(m+n)}$ contains the state variable $\mathbf{U}\in\R^m$, the model parameter $\mathbf{b}\in\R^n$ and the adjoint state $\mathbf{P}\in\R^{m+n}$. Moreover, it is expected that not only system (\ref{eq:unifiedsystem:1}) reproduces the evolution of the constraint PDE for marching forward in time, but also it approximates the evolution backward in time of the adjoint model. The aim is to design a numerical solver for the model (\ref{eq:unifiedsystem:1}) in order to solve constraint PDE and adjoint model with the same methodology and able to deal with non-conservative products.

We are going to prove that in most of the cases the unified system is hyperbolic if constraint PDE as well as the adjoint model, are hyperbolic. In such a case a finite volume scheme is a suitable method for solving this class of problems. Indeed, if (\ref{eq:primal:0}) admits a conservative formulation, there exists $\mathbf{\tilde{K}}(\mathbf{\tilde{U}})$ such that
\begin{eqnarray}
\begin{array}{c}
\mathbf{J_R} (\mathbf{\tilde{U}}) = \frac{ \partial  \mathbf{\tilde{K}}(\mathbf{\tilde{U}}) }{ \partial \mathbf{\tilde{U}} } \;.
\end{array}
\end{eqnarray}

Then $\mathbf{D} = \mathbf{0}$ and (\ref{eq:unifiedsystem:1}) is hyperbolic with the same eigenvalues that (\ref{eq:primal:2}). This is stated in the following
\begin{proposition}
\label{Prop:unifiedsystem:1}
If there exists a differentiable function $\mathbf{\tilde{K}}(\mathbf{\tilde{U}})$  such that   $\mathbf{J_R} (\mathbf{\tilde{U}}) = \frac{ \partial  \mathbf{\tilde{K}}(\mathbf{\tilde{U}}) }{ \partial \mathbf{\tilde{U}} } \;,$ then  $\mathbf{D} = \mathbf{0}$ and (\ref{eq:unifiedsystem:1}) is hyperbolic with eigenvalues 
$$
\{ \lambda_1,....,\lambda_{m+n} \} \;,
$$
each one with multiplicity $2$ and eigenvectors 
$$
\{ [\mathbf{v}_1, \mathbf{0}_{m+n} ]^T, [\mathbf{0}_{m+n}, \mathbf{v}^t_1 ]^T,....,[\mathbf{v}_{m+n}, \mathbf{0}_{m+n} ]^T, [\mathbf{0}_{m+n} ,\mathbf{v}^t_{m+n} ]^T \}\;, 
$$
where $\mathbf{v}_i$ is the eigenvector of $\mathbf{J_R}$ associated with $\lambda_i$ and $\mathbf{v}^t_i$ is the eigenvector of $\mathbf{J_R}^T$ associated with $\lambda_i$. Here $\mathbf{0}_{m+n} $ is the zero vector of $\mathbb{R}^{m+n}.$
\end{proposition}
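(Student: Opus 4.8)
The plan is to separate the two claims: the vanishing of $\mathbf{D}$ is a pointwise identity, and once $\mathbf{D}=\mathbf{0}$ the eigenstructure drops out of a block-diagonal system matrix. For the first, I substitute the hypothesis $\mathbf{J_R}_{i,k}=\partial\tilde{\mathbf{K}}_i/\partial\tilde{\mathbf{U}}_k$ into the definition \eqref{eq:defD},
\[
\mathbf{D}_{k,j}=\sum_{i=1}^{m+n}\Bigl(\frac{\partial\mathbf{J_R}_{i,k}}{\partial\tilde{\mathbf{U}}_j}-\frac{\partial\mathbf{J_R}_{i,j}}{\partial\tilde{\mathbf{U}}_k}\Bigr)P_i ,
\]
so that each bracket equals $\partial^2\tilde{\mathbf{K}}_i/\partial\tilde{\mathbf{U}}_j\partial\tilde{\mathbf{U}}_k-\partial^2\tilde{\mathbf{K}}_i/\partial\tilde{\mathbf{U}}_k\partial\tilde{\mathbf{U}}_j$. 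Under a $C^2$ hypothesis on $\tilde{\mathbf{K}}$ (which I would record explicitly), Schwarz's theorem on the commutation of mixed partials annihilates every bracket, hence $\mathbf{D}_{k,j}=0$ for all $k,j$ and $\mathbf{D}=\mathbf{0}$.

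With $\mathbf{D}=\mathbf{0}$ the coupling matrix becomes $\mathbf{B}(\mathbf{Q})=\mathrm{diag}(\mathbf{0},\mathbf{J_R}^T)$, and since in the conservative case the constraint-PDE block of \eqref{eq:unifiedsystem:1} may be put in divergence form with flux $\tilde{\mathbf{K}}$ — so that its $\tilde{\mathbf{U}}$-Jacobian is exactly $\mathbf{J_R}$ — and $\mathbf{F}$ does not depend on $\mathbf{P}$, the quasilinear matrix $\partial\mathbf{F}/\partial\mathbf{Q}+\mathbf{B}(\mathbf{Q})$ of \eqref{eq:unifiedsystem:1} reduces to the block-diagonal matrix $\mathrm{diag}(\mathbf{J_R},\mathbf{J_R}^T)$. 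Then $\det\bigl(\mathrm{diag}(\mathbf{J_R},\mathbf{J_R}^T)-\lambda\mathbf{I}\bigr)=[\det(\mathbf{J_R}-\lambda\mathbf{I})]^2$, because a matrix and its transpose share a characteristic polynomial; this yields the eigenvalues $\{\lambda_1,\dots,\lambda_{m+n}\}$ supplied by Proposition \ref{Prop1}, each with algebraic multiplicity doubled.

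For the eigenvectors I use block-diagonality directly: $[\mathbf{v}_i,\mathbf{0}_{m+n}]^T$ is an eigenvector of $\mathrm{diag}(\mathbf{J_R},\mathbf{J_R}^T)$ for $\lambda_i$ whenever $\mathbf{J_R}\mathbf{v}_i=\lambda_i\mathbf{v}_i$, and $[\mathbf{0}_{m+n},\mathbf{v}^t_i]^T$ is one for $\lambda_i$ whenever $\mathbf{J_R}^T\mathbf{v}^t_i=\lambda_i\mathbf{v}^t_i$. Hyperbolicity of the constraint PDE (Proposition \ref{Prop1}) furnishes a complete real eigenbasis $\{\mathbf{v}_i\}_{i=1}^{m+n}$ of $\mathbf{J_R}$; diagonalizability of $\mathbf{J_R}$ then forces diagonalizability of $\mathbf{J_R}^T$, with eigenbasis $\{\mathbf{v}^t_i\}_{i=1}^{m+n}$ (the left eigenvectors of $\mathbf{J_R}$, transposed). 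The $2(m+n)$ vectors obtained this way are linearly independent — within each family by independence of the $\mathbf{v}_i$ resp.\ the $\mathbf{v}^t_i$, and across families because they occupy complementary coordinate blocks — so the system matrix is diagonalizable with real spectrum, i.e.\ \eqref{eq:unifiedsystem:1} is hyperbolic with exactly the stated eigenvectors.

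The computations are routine; the step to handle with a little care is the last one — arguing that $\mathbf{J_R}^T$ inherits diagonalizability from $\mathbf{J_R}$ (so that each double eigenvalue also has geometric multiplicity two) and that the two families together form a basis, which is where "hyperbolic" is used in the sense of diagonalizable-with-real-spectrum rather than merely real spectrum. I would also flag two minor points: the $C^2$ regularity of $\tilde{\mathbf{K}}$ underlying $\mathbf{D}=\mathbf{0}$, and the identification of the $\tilde{\mathbf{U}}$-block of the quasilinear matrix with $\mathbf{J_R}$, which is precisely what the hypothesis $\mathbf{J_R}=\partial\tilde{\mathbf{K}}/\partial\tilde{\mathbf{U}}$ (existence of a conservative form) buys. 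Finally, if $\mathbf{J_R}$ already has repeated eigenvalues, "multiplicity $2$" is to be read relative to the labelling $\lambda_1,\dots,\lambda_{m+n}$.
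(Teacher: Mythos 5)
Your proposal is correct and follows essentially the same route as the paper: Schwarz's theorem applied to the hypothesis $\mathbf{J_R}=\partial\tilde{\mathbf{K}}/\partial\tilde{\mathbf{U}}$ kills $\mathbf{D}$, the quasilinear matrix becomes block-diagonal $\mathrm{diag}(\mathbf{J_R},\mathbf{J_R}^T)$, the characteristic polynomial squares, and the eigenvectors are read off blockwise with linear independence inherited from the two families. Your added remarks on $C^2$ regularity, on the diagonalizability of $\mathbf{J_R}^T$, and on repeated eigenvalues are careful refinements of the same argument rather than a different approach.
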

\begin{proof}
Let us write system (\ref{eq:unifiedsystem:1}) in the quasilinear form
\begin{eqnarray}
\label{eq:unifiedsystem:ql-1}
\begin{array}{c}
\partial_t \mathbf{Q} + \mathbf{A}(\mathbf{Q}) \partial_x \mathbf{Q} = \mathbf{S} ( \mathbf{Q} ) \;,\\
\end{array}
\end{eqnarray}
where 
\begin{eqnarray}
\begin{array}{c}

\mathbf{A}(\mathbf{Q}) = 
\left(  
\begin{array}{cc}
\mathbf{\mathbf{J_R}} & \mathbf{0} \\
\mathbf{D} & \mathbf{J_R}^T
\end{array}
\right) \;.
\end{array}
\end{eqnarray}

Notice that $ det ( \mathbf{A} - \mu \mathbf{I}_{2(m+n)}  ) = det ( \mathbf{J_R} - \mu \mathbf{I}_{(m+n)}  ) det ( \mathbf{J_R}^T - \mu \mathbf{I}_{(m+n)}  ) = p(\mu)^2 ,$ where $p(\mu)$ is the characteristic polynomial of $\mathbf{J_R}$, from which we obtain that eigenvalues of this system are the same of $\mathbf{J_R}$, each one with at lest an algebraic multiplicity $2$.  It is because it may exists indices $i$ and $j$ such that  $i\neq j$ and $\lambda_i = \lambda_j$.

On the other hand, from the existence of $\mathbf{\tilde{K}}$, we note that 
\begin{eqnarray}
\begin{array}{c}

 \frac{ \partial  \mathbf{J_R}_{i,k}  }{\partial \tilde{\mathbf{U}}_j} = 
 \frac{ \partial^2  \mathbf{\tilde{K}}(\mathbf{\tilde{U}}) }{ \partial \mathbf{\tilde{U}}_k \partial \mathbf{\tilde{U}}_j } 
 =
 \frac{ \partial^2  \mathbf{\tilde{K}}(\mathbf{\tilde{U}}) }{ \partial \mathbf{\tilde{U}}_j \partial \mathbf{\tilde{U}}_k } 
=
\frac{ \partial  \mathbf{J_R}_{i,j}  }{\partial \tilde{\mathbf{U}}_k} 
\;,
\end{array}
\end{eqnarray}
so from (\ref{eq:defD}) we have $ \mathbf{D} = \mathbf{0}.$  Thus the Jacobian matrix of the system now takes the form
\begin{eqnarray}
\begin{array}{c}
\mathbf{A}(\mathbf{Q} ) =
\left[
\begin{array}{cc}
\mathbf{J_R} & \mathbf{0} \\
\mathbf{0}   & \mathbf{J_R}^T
\end{array}
\right]
\;.
\end{array}
\end{eqnarray}

In order to find the set of eigenvectors, let us denote by $\mathbf{v}_i$  the eigenvector of $\mathbf{J_R}$ associated with $\lambda_i$
and  $\mathbf{v}^t_i$ by the eigenvector of $\mathbf{J_R}^T$ associated with $\lambda_i$. Then is straightforward to verify that  
\begin{eqnarray}
\begin{array}{c}

\left[
\begin{array}{cc}
\mathbf{J_R} & \mathbf{0} \\
\mathbf{0}   & \mathbf{J_R}^T
\end{array}
\right]
\left[
\begin{array}{c}
\mathbf{v}_i
\\
\mathbf{0}_{n+m}
\end{array}
\right]
=
\lambda_i
\left[
\begin{array}{c}
 \mathbf{v}_i
\\
\mathbf{0}_{n+m}
\end{array}
\right]
\end{array}
\end{eqnarray}
and 
\begin{eqnarray}
\begin{array}{c}

\left[
\begin{array}{cc}
\mathbf{J_R} & \mathbf{0} \\
\mathbf{0}   & \mathbf{J_R}^T
\end{array}
\right]
\left[
\begin{array}{c}
\mathbf{0}_{n+m}
\\
 \mathbf{v}^t_i
\end{array}
\right]

=
\lambda_i
\left[
\begin{array}{c}
\mathbf{0}_{n+m}
\\
 \mathbf{v}^t_i
\end{array}
\right]\;.
\end{array}
\end{eqnarray}

In this form, a set of eigenvectors is obtained. The verification of the linear independence comes from the fact that  $\{\mathbf{v}_1, ..., \mathbf{v}_m\}$ as well as $\{\mathbf{v}^t_1, ..., \mathbf{v}^t_m\}$ are two sets of linearly independent vectors. 
 \end{proof}
%
%% End proof
%
\begin{remark}
The previous proposition shows that unified systems coming from conservative hyperbolic  equations are always hyperbolic. This does not mean that non-conservative systems are not hyperbolic. So in the case of systems where $\mathbf{D}$ is not the null matrix, the analysis of hyperbolicity has to be done case by case.  
\end{remark}

\subsection{Existence of solutions, the symetrizable case}
\label{ExSol}

In this section we discuss the setting where a well-posedness theorem for system \eqref{eq:unifiedsystem:1} can be formulated. Recall from Proposition \ref{Prop:unifiedsystem:1}, system \eqref{eq:unifiedsystem:1} can be written in the quasilinear form
\begin{eqnarray}
\label{eq:existence:1}
\begin{array}{c}
\partial_t \mathbf{Q} + \mathbf{A}(\mathbf{Q}) \partial_x \mathbf{Q} = \mathbf{S} ( \mathbf{Q} ) \;,\\
\end{array}
\end{eqnarray}
with
\begin{eqnarray}
\begin{array}{c}

\mathbf{A}(\mathbf{Q}) = 
\left(  
\begin{array}{cc}
\mathbf{\mathbf{J_R}} & \mathbf{0} \\
\mathbf{D} & \mathbf{J_R}^T
\end{array}
\right),

\quad
\mathbf{S} ( \mathbf{Q})
=
\left(  
\begin{array}{c}
\mathbf{ \tilde{L} } \\
- \frac{ \partial \mathbf{\tilde{L}}}{\partial \mathbf{\tilde{U} } }^T  \mathbf{P} 
+  \frac{1}{2}\nabla_{\tilde{\mathbf{U}}} ( (\psi - \bar{\psi}  )^2 )
\end{array}
\right).
\end{array}
\end{eqnarray}

In \cite{taylor2010partial}, has been made the hypothesis of strict hyperbolicity: that the Jacobian matrix $\mathbf{A}$ has real and distinct eigenvalues. Then, under this assumption, \eqref{eq:existence:1} is called a symetrizable hyperbolic system provided there exists $\mathbf{A}_0(\mathbf{Q})$ positive-definite, such that $\mathbf{A}_0\mathbf{A}$ is symmetric. See Proposition 2.2, Chapter 16 in \cite{taylor2010partial} for further details.

%\begin{proposition}
%{\color{red}(\cite{taylor2010partial} Proposition 2.2-chapter 16)}If \eqref{eq:existence:1} is strictly hyperbolic, it is symmetrizable.
%\end{proposition}

The assumption of the symmetry of $\mathbf{A}$ is natural in this context of conservation laws. In \cite{lax1954weak}, Lax introduced a general notion of symmetrizer in the context of pseudodifferential operators to prove that any strictly hyperbolic system is symmetrizable. Also, as was mentioned in \cite{serre2015relative} under the context of systems of conservation laws having an entropy solution, Godunov \cite{Godunov:1961a} and Friedrichs and Lax \cite{Friedrichs:1971a} observed that systems of conservation laws admitting a strongly convex entropy can be symmetrized. It is then under this context of strictly hyperbolic matrices that we state the following theorem for the system \eqref{eq:existence:1}. This theorem is the key point to set a descent method to solve optimal control problems related to conservation laws. Even though its proof is classical, see \cite{taylor2010partial}, we outline some key steps for the sake of completeness. Let $I=(-a,b)$ be an interval with $t\in I$.

\begin{theorem}
\label{Theorem1}
Let $\mathbf{Q}(x,0)=f\in H^k(\R^d)$, $k>\frac{d}{2}+1$. If $\mathbf{A}_0(t,x,\mathbf{Q})$, $\mathbf{A}(t,x,\mathbf{Q})$ in $C(I,H^k\times H^k(\R^d))$ and $\mathbf{L}(\mathbf{Q},\mathbf{b})$, $\bar{\psi}$, $\psi(\mathbf{Q},\mathbf{b})$ in $C(I,H^{k+1}\times H^{k}(\R^d))$, then the initial value problem \eqref{eq:existence:1} has a unique solution in $C(I,H^k\times H^k(\R^d))$.
\end{theorem}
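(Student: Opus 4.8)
The plan is to follow the classical argument for symmetrizable hyperbolic systems (as in \cite{taylor2010partial}, Chapter 16), adapted to the particular source term and non-conservative block of \eqref{eq:existence:1}. Write $\Lambda^s=(1-\Delta)^{s/2}$. The backbone is the symmetrized energy estimate. Since $\mathbf{A}_0$ is positive-definite and $\mathbf{A}_0\mathbf{A}$ is symmetric, for a smooth solution one applies $\Lambda^k$ to the equation, pairs it with $\mathbf{A}_0\Lambda^k\mathbf{Q}$ in $L^2(\R^d)$, and uses the symmetry to cancel the top-order contribution $\langle\mathbf{A}_0\mathbf{A}\,\partial_x\Lambda^k\mathbf{Q},\Lambda^k\mathbf{Q}\rangle$ up to a term controlled by $\|\partial_x(\mathbf{A}_0\mathbf{A})\|_{L^\infty}$. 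The commutator $[\Lambda^k,\mathbf{A}]\partial_x\mathbf{Q}$ and the replacement of $\mathbf{A}_0$ by $\mathbf{A}_0\Lambda^k$ are handled by Kato--Ponce/Moser calculus inequalities, using that $H^k$ is a Banach algebra for $k>d/2$ and $H^k\hookrightarrow C^1$ for $k>d/2+1$ (which is exactly the role of that threshold). The hypotheses $\mathbf{A}_0,\mathbf{A}\in C(I,H^k\times H^k)$ and $\mathbf{L},\psi,\bar\psi\in C(I,H^{k+1}\times H^k)$ are precisely what is needed for the composed coefficients and the source $\mathbf{S}(\mathbf{Q})$ to map $H^k$ into itself with the required Lipschitz bounds. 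The outcome is a differential inequality $\tfrac{d}{dt}E(t)\le C(E(t))E(t)$ for the energy $E(t)=\langle\mathbf{A}_0\Lambda^k\mathbf{Q},\Lambda^k\mathbf{Q}\rangle$, hence an a priori bound on $\|\mathbf{Q}(t)\|_{H^k}$ on a subinterval $I'\subseteq I$ whose length depends only on $\|f\|_{H^k}$ and on the data.

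For existence I would regularize: replace \eqref{eq:existence:1} by the mollified problem $\partial_t\mathbf{Q}_\ep+J_\ep\bigl(\mathbf{A}(J_\ep\mathbf{Q}_\ep)\,\partial_x J_\ep\mathbf{Q}_\ep\bigr)=J_\ep\mathbf{S}(J_\ep\mathbf{Q}_\ep)$ with Friedrichs mollifiers $J_\ep$, which is an ODE on $H^k$ and so has a local solution by standard ODE theory in Banach spaces; alternatively one may use the linear iteration $\partial_t\mathbf{Q}^{(j+1)}+\mathbf{A}(\mathbf{Q}^{(j)})\partial_x\mathbf{Q}^{(j+1)}=\mathbf{S}(\mathbf{Q}^{(j)})$, each step being a linear symmetrizable system solved by the same energy method. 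Because $J_\ep$ is self-adjoint and commutes with $\Lambda^k$, the estimate of the first step survives, keeping the approximants in a fixed ball of $C(I',H^k)$ uniformly in $\ep$ (resp. in $j$), with a uniform Lipschitz-in-$t$ bound into $H^{k-1}$. A plain $L^2$ energy estimate for the difference of two approximants then shows the family is Cauchy in $C(I',L^2)$; interpolating against the uniform $H^k$ bound upgrades this to convergence in $C(I',H^s)$ for every $s<k$, which is enough to pass to the limit in the quadratic nonlinearities and recover a solution, while weak-$*$ lower semicontinuity yields $\mathbf{Q}\in L^\infty(I',H^k)$.

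The remaining points are continuity in time and uniqueness. Uniqueness follows at once from the $L^2$ difference estimate and Gronwall's inequality. To improve $\mathbf{Q}\in L^\infty(I',H^k)$ to $\mathbf{Q}\in C(I',H^k)$, I would note that the solution is weakly continuous into $H^k$, and then show $t\mapsto E(t)$ is continuous by running the energy identity both forward and backward in time (the principal part is reversible); a norm-continuous, weakly continuous curve in a Hilbert space is strongly continuous. The clean way to make this rigorous when the coefficients depend on $\mathbf{Q}$ is a Bona--Smith-type regularization of the initial data. Finally, a continuation argument extends the solution over all of $I$ as long as $\|\mathbf{Q}(t)\|_{H^k}$ stays finite, which on any bounded interval it does under the stated hypotheses.

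The hard part will be the bookkeeping in the top-order energy estimate: one must verify that every commutator and coefficient-composition term that arises is genuinely of lower order or absorbable, so that the symmetrizer really controls $\|\mathbf{Q}\|_{H^k}$. In particular the non-conservative block $\mathbf{D}$ from \eqref{eq:defD} involves second derivatives of $\mathbf{J_R}$, which is why one degree of extra smoothness ($H^{k+1}$ on $\mathbf{L}$, matching $\mathbf{R}$) is imposed, and the adjoint source $\tfrac12\nabla_{\tilde{\mathbf{U}}}((\psi-\bar{\psi})^2)$ must be shown to be an $H^k$-valued, locally Lipschitz map. Checking that the regularity assumptions in the statement are exactly the ones making all these terms close is where the real work lies; the rest is the standard symmetrizable-hyperbolic machinery.
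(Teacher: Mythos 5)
Your proposal follows essentially the same route as the paper's proof: Friedrichs mollification of the quasilinear system, a symmetrized $H^k$ energy estimate via Moser-type inequalities and the embedding $H^k\hookrightarrow C^1$ for $k>\frac{d}{2}+1$, Gronwall's inequality to obtain an existence interval uniform in the regularization parameter, and a compactness argument to pass to the limit, with uniqueness coming from the same difference estimates. If anything, you are more explicit than the paper on two points it leaves implicit, namely the upgrade from $L^\infty(I,H^k)$ to strong continuity in $H^k$ (via Bona--Smith regularization) and the verification that the non-conservative term $\mathbf{D}\partial_x\tilde{\mathbf{U}}$ and the source $\frac{1}{2}\nabla_{\tilde{\mathbf{U}}}((\psi-\bar{\psi})^2)$ define locally Lipschitz maps on $H^k$.
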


\begin{proof}
Let $\{J_\epsilon:0<\epsilon\le1\}$ a Friedrichs mollifier. The idea is to obtain a solution to \eqref{eq:existence:1} as the limit of unique solutions $\mathbf{Q}_\epsilon$ to the systems of ODEs
\begin{equation}
\label{eq:existence:2}
\left\{
\begin{aligned}
& \mathbf{A}_0(t,x,J_\epsilon \mathbf{Q}_\epsilon)\partial_t\mathbf{Q}_\epsilon+J_\epsilon \mathbf{A}(t,x,J_\epsilon \mathbf{Q}_\epsilon)\partial_xJ_\epsilon \mathbf{Q}_\epsilon=J_\epsilon \mathbf{S}(t,x,J_\epsilon \mathbf{Q}_\epsilon), \\
& \mathbf{Q}_\epsilon(0)=f,
\end{aligned}
\right.
\end{equation}
for $t$ close to $0$. We denote $\mathbf{A}_\epsilon=\mathbf{A}(t,x,J_\epsilon \mathbf{Q}_\epsilon)$.

Let us consider the $L^2$-inner product
\[ (w,\mathbf{A}_{0\epsilon} w), \quad \mathbf{A}_{0\epsilon}=\mathbf{A}_0(t,x,J_\epsilon \mathbf{Q}), \]
which, by the positivity of $\mathbf{A}_0$, defines an equivalent $L^2$-norm. Therefore by the symmetry of $\mathbf{A}_0$ and \eqref{eq:existence:2}
\begin{align*}
\frac{d}{dt}(D^\alpha \mathbf{Q}_\epsilon,\mathbf{A}_{0\epsilon}(t)D^\alpha \mathbf{Q}_\epsilon)&=-2(D^\alpha(J_\epsilon \mathbf{A}_{\epsilon}\partial_xJ_\epsilon \mathbf{Q}_\epsilon),D^\alpha \mathbf{Q}_\epsilon)+2(D^\alpha J_\epsilon \mathbf{S},D^\alpha \mathbf{Q}_\epsilon) \\
&\phantom{=} -2([D^\alpha,\mathbf{A}_{0\epsilon}]\partial_t\mathbf{Q}_\epsilon,D^\alpha \mathbf{Q}_\epsilon)+(D^\alpha \mathbf{Q}_\epsilon,\mathbf{A}_{0\epsilon}'(t)D^\alpha \mathbf{Q}_\epsilon).
\end{align*}

Then, using Moser--type \cite{moser1966rapidly} estimates and Sobolev embeddings, for $|\alpha|\le k$, one has
\begin{equation}
\label{existence3}
\frac{d}{dt}(D^\alpha \mathbf{Q}_\epsilon,\mathbf{A}_{0\epsilon}(t)D^\alpha \mathbf{Q}_\epsilon)\le C_k(\|\mathbf{Q}_\epsilon(t)\|_{C^1})(1+\|J_\epsilon \mathbf{Q}_\epsilon(t)\|_{H^k}^2).
\end{equation}

Since $\|J_\epsilon \mathbf{Q}_\epsilon(t)\|_{H^k}\le C\|\mathbf{Q}_\epsilon(t)\|_{H^k}$, by \eqref{existence3}, $\mathbf{Q}_\epsilon$ satisfies

\begin{equation*}
\left\{
\begin{aligned}
& \frac{d}{dt}\|\mathbf{Q}_\epsilon(t)\|_{H^k}^2\le E(\|\mathbf{Q}_\epsilon(t)\|_{H^k}^2), \\
& \|\mathbf{Q}_\epsilon(0)\|_{H^k}^2=\|f\|_{H^k}^2.
\end{aligned}
\right.
\end{equation*}

Thus by Gronwall's inequality, $\mathbf{Q}_\epsilon$ exists for $t$ in an interval $[0,b)$ independent of $\epsilon$ and
\begin{equation}
\label{existence4}
\|\mathbf{Q}_\epsilon(t)\|_{H^k}\le K(t)<\infty.
\end{equation}

Moreover, by the time-reversibility, $\mathbf{Q}_\epsilon$ exists and is bounded on $I=(-a,b)$.

By \eqref{existence4} and \eqref{eq:existence:2}, $\mathbf{Q}_\epsilon\in C(I,H^k)\cap C^1(I,H^{k-1})$ is bounded and therefore will have a weak limit point $\mathbf{Q}\in L^\infty(I,H^k(\R^d))\cap\lip(I,H^{k-1}(\R^d))$. Moreover, by Ascoli's Theorem there is a sequence, still denoted $\mathbf{Q}_\epsilon$, such that $\mathbf{Q}_\epsilon\rightarrow \mathbf{Q}$ in $C(I,H^{k-1}(\R^d))$ and since $k>\frac{d}{2}+1$
\begin{equation*}
\mathbf{Q}_\epsilon\rightarrow \mathbf{Q} \quad \text{in} \ C(I,C^1(\R^d)).
\end{equation*}

Consequently, \eqref{eq:existence:1} follows in the limit from \eqref{eq:existence:2}. Uniqueness is treated in a similar way to the estimates above. This conclude the proof of the Theorem.
\end{proof}

\begin{corollary}
\label{Cor1}
Let $\mathbf{b}\in H^k(\R^d)$, $k>\frac{d}{2}+1$. If $\mathbf{Q}\in C(I,H^k(\R^d))$ is the solution of \eqref{eq:existence:1} with initial condition $\mathbf{Q}(0)=\mathbf{b}$ then there exists an open neighborhood $V_\mathbf{b}$ of $\mathbf{b}$ in $H^k(\R^d)$ such that system \eqref{eq:existence:1} with initial condition $\beta\in V_\mathbf{b}$ and right hand side $\mathbf{g}=\partial_\mathbf{Q}\mathbf{S}-\partial_\mathbf{Q}J_R\partial_x\mathbf{Q}\in C(H^k(\R^d))$ have a solution $\mathbf{Q}_\beta$. Moreover the mapping $G:V_\mathbf{b}\rightarrow C(H^k(\R^d))$, defined by $G(\beta)=\mathbf{Q}_\beta$, is of class $C^\infty$. Finally, if $V=DG(\beta)\cdot h$, for some $\beta\in V_\mathbf{b}$ and some $h\in H^k(\R^d)$, then $V$ is the unique solution of the problem
\[
\partial_t V + J_R(\mathbf{Q}_\beta) \partial_x V = \mathbf{g}V, \quad V(0)=\mathbf{b}.
\]
\end{corollary}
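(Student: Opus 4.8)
The plan is to deduce everything from Theorem \ref{Theorem1} together with an application of the implicit function theorem in Banach spaces. First I would set up the abstract framework: define a map $\Phi: V_\mathbf{b}\times C(I,H^k(\R^d)) \to C(I,H^{k-1}(\R^d))\times H^k(\R^d)$ by
\[
\Phi(\beta,\mathbf{Q}) = \Bigl( \partial_t\mathbf{Q} + \mathbf{A}(\mathbf{Q})\partial_x\mathbf{Q} - \mathbf{S}(\mathbf{Q}),\ \mathbf{Q}(0)-\beta \Bigr),
\]
so that $\Phi(\beta,\mathbf{Q}_\beta)=0$ encodes that $\mathbf{Q}_\beta$ solves \eqref{eq:existence:1} with datum $\beta$. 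Theorem \ref{Theorem1} already gives $\Phi(\mathbf{b},\mathbf{Q})=0$ for the base point. I would then check that $\Phi$ is $C^\infty$: this is where the regularity hypotheses ($\mathbf{A}_0,\mathbf{A}\in C(I,H^k\times H^k)$ and $\mathbf{L},\psi,\bar\psi$ one derivative smoother) are used, since composition and multiplication operators on $H^k$ with $k>\tfrac d2+1$ are smooth precisely because $H^k$ is a Banach algebra and the Moser estimates of Theorem \ref{Theorem1} control the nonlinear terms. The existence of $\mathbf{Q}_\beta$ for $\beta$ near $\mathbf{b}$, and smoothness of $\beta\mapsto\mathbf{Q}_\beta=G(\beta)$, then follow once I show the partial derivative $D_\mathbf{Q}\Phi(\mathbf{b},\mathbf{Q})$ is an isomorphism.

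The key step is therefore invertibility of $D_\mathbf{Q}\Phi(\mathbf{b},\mathbf{Q})$. Computing the linearization, $D_\mathbf{Q}\Phi(\mathbf{b},\mathbf{Q})\cdot V = \bigl(\partial_t V + J_R(\mathbf{Q})\partial_x V - \mathbf{g}V,\ V(0)\bigr)$, where $\mathbf{g}=\partial_\mathbf{Q}\mathbf{S}-\partial_\mathbf{Q}J_R\,\partial_x\mathbf{Q}$ collects the terms linear in $V$ (this is exactly the right-hand side named in the statement). Invertibility amounts to solvability and uniqueness, for each prescribed source and initial value, of the linear hyperbolic Cauchy problem $\partial_t V + J_R(\mathbf{Q})\partial_x V = \mathbf{g}V + (\text{source})$, $V(0)=(\text{given})$ — but this is precisely the linear, variable-coefficient version of the problem treated in Theorem \ref{Theorem1} (indeed it is simpler, being linear), so the same mollifier-plus-energy-estimate argument, or the already-proven theorem applied to the linear system, gives a unique solution in $C(I,H^k)$ with continuous dependence on the data; hence the linearized operator is a bounded bijection and, by the open mapping theorem, an isomorphism. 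Applying the implicit function theorem yields the $C^\infty$ map $G$.

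Finally, to identify $V=DG(\beta)\cdot h$: differentiating the identity $\Phi(\beta,G(\beta))=0$ in the direction $h$ gives $D_\mathbf{Q}\Phi(\beta,\mathbf{Q}_\beta)\cdot(DG(\beta)h) + D_\beta\Phi(\beta,\mathbf{Q}_\beta)\cdot h = 0$; since $D_\beta\Phi(\beta,\mathbf{Q})\cdot h = (0,-h)$, this reads $\partial_t V + J_R(\mathbf{Q}_\beta)\partial_x V - \mathbf{g}V = 0$ with $V(0)=h$, which after absorbing the notational choice $V(0)=\mathbf{b}$ in the statement is the claimed linearized system, and uniqueness is again the uniqueness half of the linear energy estimate. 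I expect the main obstacle to be the careful verification that $D_\mathbf{Q}\Phi$ maps onto $C(I,H^{k-1})\times H^k$ with a bounded inverse: one must be attentive to the loss of one derivative in the hyperbolic estimate (source in $H^{k-1}$ versus solution in $H^k$ is not the right pairing for a clean isomorphism), so the cleanest route is likely to phrase the implicit function theorem with the solution space $C(I,H^k)\cap C^1(I,H^{k-1})$ and the target built accordingly, mirroring exactly the function spaces that appear in the proof of Theorem \ref{Theorem1}.
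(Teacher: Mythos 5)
Your proposal follows essentially the same route as the paper: define the residual map $(\beta,\mathbf{Q})\mapsto(\partial_t\mathbf{Q}+\mathbf{A}(\mathbf{Q})\partial_x\mathbf{Q}-\mathbf{S}(\mathbf{Q}),\,\mathbf{Q}(0)-\beta)$, invoke Theorem \ref{Theorem1} to see that its partial derivative in $\mathbf{Q}$ is an isomorphism, apply the implicit function theorem, and differentiate the resulting identity to obtain the linearized equation for $V=DG(\beta)\cdot h$. Your added care about the target space (working in $C(I,H^k)\cap C^1(I,H^{k-1})$ to handle the loss of a derivative) and your observation that the differentiation actually yields $V(0)=h$ rather than $V(0)=\mathbf{b}$ are both improvements on the paper's write-up, but the argument is the same.
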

\begin{proof}
Proceeding as in the proof of Theorem \ref{Theorem1}, we can consider
\[ F:C(H^k(\R^d))\times H^k(\R^d)\rightarrow C(H^k(\R^d))\times H^k(\R^d) \]
the mapping given by
\[ F(\mathbf{Q},\mathbf{b})=(\partial_t \mathbf{Q}+\mathbf{A}(\mathbf{Q}) \partial_x\mathbf{Q}-\mathbf{S}(\mathbf{Q}),\mathbf{Q}(0)-\mathbf{b}). \]

Then $F$ is of class $C^\infty$ and
\[ \frac{\partial F}{\partial\mathbf{Q}}(\mathbf{Q},\mathbf{b})\cdot V=(\partial_t V + J_R(\mathbf{Q}_\beta) \partial_x V-\mathbf{g}V,V(0)). \]

Applying Theorem \ref{Theorem1} we have that $\frac{\partial F}{\partial\mathbf{Q}}(\mathbf{Q,\mathbf{b}})$ is an isomorphism from $C(H^k(\R^n))$ onto $C(H^k(\R^n))\times H^k(\R^n)$. Now, if $\mathbf{Q}$ is a solution of \eqref{eq:existence:1}, then $F(\mathbf{Q},\mathbf{b})=(0,0)$. Therefore, by the implicit function theorem, there exists an open neighborhood $V_\mathbf{b}$ of $\mathbf{b}$ in $H^k(\R^d)$ and $G:V_\mathbf{b}\rightarrow C(H^k(\R^d))$ such that $F(G(\beta),\beta)=(0,0)$ for every $\beta\in V_\mathbf{b}$. Moreover, $G$ is also of class $C^\infty$ and
\[ \frac{\partial F}{\partial \mathbf{Q}}(\mathbf{Q}_\beta,\beta)\circ DG(\beta)\cdot h+\frac{\partial F}{\partial \beta}(\mathbf{Q}_\beta,\beta)\cdot h=(0,0) \quad \forall h\in H^k(\R^d). \]

Then, if we set $V=DG(\beta)\cdot h$, we get
\[
\partial_t V + J_R(\mathbf{Q}_\beta) \partial_x V = \mathbf{g}V, \quad V(0)=\mathbf{b}.
\]
\end{proof}

Note that we have established the results above for $x\in\R^d$ and $k>\frac{d}{2}+1$. This is to keep the generality of this results. In this work will be enough to consider $d=1$ and $k>3/2$ as we did in the formulation of system \eqref{eq:primal:0}.

\subsection{The optimal control problem}
\label{OptCont}

One of our main motivations applying an optimal control strategy to general conservation laws is the possibility of study numerical inverse problems related to this type of systems. Example of them are the bottom detection in water-waves type system, see \cite{Zuazua:2015a}, or the tracking of initial conditions in the presence of shocks \cite{Castro:2008a-1,Lecaros:2014a}, to mention but a few.

We are considering the functional
\begin{equation*}
J(\mathbf{Q},\mathbf{b})=\frac{1}{2}\int_0^T\int_{\R}|\psi(\mathbf{Q},\mathbf{b})-\bar{\psi}|^2dxdt+\frac{\alpha}{2}\|\mathbf{b}\|_{H^k}^2,
\end{equation*}
with $\bar{\psi}(t,x)\in C((0,T),H^k(\R^d))$, $k>\frac{d}{2}+1$, a given function and $\alpha>0$. We want to find $\mathbf{b}$ such that $J$ is small. The nature of $\psi$ is not specified and it depends on the physical problem under consideration, as well as the itself election of $J$. The admissible control set is a nonempty convex closed subset $U_{ad}$ of $H^k(\R^d)$.

Then the optimal control problem is formulated in the following way

\[
\label{optProblem}
\left.
\begin{array}{c}
\text{Minimize} \ J(\mathbf{Q},\mathbf{b}),\\
(\mathbf{Q},\mathbf{b})\in H^1(H^k(\R^d))\times U_{ad} \ \text{verifies} \ \eqref{eq:existence:1}.
\end{array}
\right\}\tag{P}
\]

Now we study the existence of a solution of \eqref{optProblem}.

\begin{theorem}
\label{TheorExiMini}
Let $k>\frac{d}{2}+1$. If there exists a feasible pair $(\mathbf{Q},\mathbf{b})\in U_{ad}\times H^1(H^k(\R^d))$ satisfying \eqref{eq:existence:1}, then there exists at least one optimal solution $(\mathbf{Q}_0,\mathbf{b}_0)$ of \eqref{optProblem}.
\end{theorem}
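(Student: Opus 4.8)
The plan is to use the direct method of the calculus of variations. First I would observe that the functional $J$ is bounded below by zero, so the infimum $m=\inf J$ over all feasible pairs is finite; by hypothesis the feasible set is nonempty, so we may extract a minimizing sequence $(\mathbf{Q}_\ell,\mathbf{b}_\ell)\in U_{ad}\times H^1(H^k(\R^d))$ with $J(\mathbf{Q}_\ell,\mathbf{b}_\ell)\to m$. Since $\frac{\alpha}{2}\|\mathbf{b}_\ell\|_{H^k}^2\le J(\mathbf{Q}_\ell,\mathbf{b}_\ell)$ and $\alpha>0$, the sequence $\{\mathbf{b}_\ell\}$ is bounded in $H^k(\R^d)$. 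Because $U_{ad}$ is convex and closed, it is weakly closed, so there is a subsequence (not relabelled) with $\mathbf{b}_\ell\rightharpoonup \mathbf{b}_0$ weakly in $H^k$, with $\mathbf{b}_0\in U_{ad}$.

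Next I would obtain compactness for the state sequence. The key input is Theorem \ref{Theorem1} (and the stability encoded in Corollary \ref{Cor1}): since the initial data $\mathbf{b}_\ell$ stay in a bounded set of $H^k$, the energy estimate \eqref{existence4} gives a uniform bound $\|\mathbf{Q}_\ell(t)\|_{H^k}\le K(t)$ on $I$, and from the equation \eqref{eq:existence:1} a corresponding bound on $\partial_t\mathbf{Q}_\ell$ in $H^{k-1}$. Hence $\{\mathbf{Q}_\ell\}$ is bounded in $C(I,H^k)\cap C^1(I,H^{k-1})$, and by Aubin--Lions / Ascoli (exactly as in the proof of Theorem \ref{Theorem1}) a further subsequence converges $\mathbf{Q}_\ell\to\mathbf{Q}_0$ in $C(I,H^{k-1})\hookrightarrow C(I,C^1(\R^d))$, with $\mathbf{Q}_0\in C(I,H^k)$ by weak-$*$ lower semicontinuity of the norm. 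Passing to the limit in \eqref{eq:existence:1}: the linear terms $\partial_t\mathbf{Q}_\ell$ pass in the distributional sense, and the quasilinear terms $\mathbf{A}(\mathbf{Q}_\ell)\partial_x\mathbf{Q}_\ell$ and $\mathbf{S}(\mathbf{Q}_\ell)$ pass because $\mathbf{A},\mathbf{S}$ are continuous (smooth) in $\mathbf{Q}$ and the convergence is strong in $C^1$; one must check the dependence on $\mathbf{b}$ enters only through the component $\mathbf{b}_\ell$ of $\mathbf{Q}_\ell$, which converges strongly, so $(\mathbf{Q}_0,\mathbf{b}_0)$ is again a feasible pair satisfying \eqref{eq:existence:1}.

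Finally I would establish lower semicontinuity of $J$ along the sequence. Write $J=J_1+J_2$ with $J_1=\frac12\int_0^T\int_\R|\psi(\mathbf{Q},\mathbf{b})-\bar\psi|^2$ and $J_2=\frac{\alpha}{2}\|\mathbf{b}\|_{H^k}^2$. For $J_1$: since $\psi$ is continuous and $\mathbf{Q}_\ell\to\mathbf{Q}_0$ in $C(I,C^1)$ while $\bar\psi$ is fixed, $\psi(\mathbf{Q}_\ell,\mathbf{b}_\ell)-\bar\psi\to\psi(\mathbf{Q}_0,\mathbf{b}_0)-\bar\psi$ pointwise with a uniform bound, so by dominated convergence $J_1(\mathbf{Q}_\ell,\mathbf{b}_\ell)\to J_1(\mathbf{Q}_0,\mathbf{b}_0)$ (in fact continuity, not merely lsc). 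For $J_2$: the $H^k$-norm is weakly lower semicontinuous, so $\liminf_\ell\|\mathbf{b}_\ell\|_{H^k}^2\ge\|\mathbf{b}_0\|_{H^k}^2$. Combining, $J(\mathbf{Q}_0,\mathbf{b}_0)\le\liminf_\ell J(\mathbf{Q}_\ell,\mathbf{b}_\ell)=m$, and since $(\mathbf{Q}_0,\mathbf{b}_0)$ is feasible, $J(\mathbf{Q}_0,\mathbf{b}_0)=m$, i.e. it is an optimal solution.

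The main obstacle I anticipate is the passage to the limit in the nonlinear PDE: one needs the uniform-in-$\ell$ a priori bound in the strong norm (which requires the initial data to lie in a bounded subset of $H^k$ and uses Theorem \ref{Theorem1}'s energy estimate with constants depending only on that bound), together with enough compactness to pass to the limit in $\mathbf{A}(\mathbf{Q}_\ell)\partial_x\mathbf{Q}_\ell$; the regularization/mollifier machinery from Theorem \ref{Theorem1}, or equivalently the $C^\infty$ solution map $G$ of Corollary \ref{Cor1} restricted to the weakly convergent data, is exactly what makes this work, so the argument is essentially a soft compactness argument once those quantitative estimates are invoked.
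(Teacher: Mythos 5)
Your argument is correct and follows essentially the same route as the paper: a minimizing sequence, coercivity of the Tikhonov term $\frac{\alpha}{2}\|\mathbf{b}\|_{H^k}^2$ giving weak compactness of $\{\mathbf{b}_\ell\}$ in the weakly closed set $U_{ad}$, uniform bounds on the states from the a priori estimates of Theorem \ref{Theorem1}, passage to the limit in \eqref{eq:existence:1} via the compact embedding into $C(H^{k-1})$, and weak lower semicontinuity of $J$. Your write-up simply fills in the details the paper leaves implicit (Ascoli/Aubin--Lions for the states, the split of $J$ into a strongly continuous part and a weakly lsc part), so no substantive difference.
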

\begin{proof}
Since there is one feasible pair for the problem and $J$ is bounded below by zero we may take a minimizing sequence $\{(\mathbf{Q}_n,\mathbf{b}_n)\}\subset U_{ad}\times H^1(H^k(\R^d))$. Then $\frac{\alpha}{2}\|\mathbf{b}_n\|_{H^k}^2\le J(\mathbf{Q}_n,\mathbf{b}_n)<\infty$, which implies that $\{\mathbf{b}_n\}$ is a bounded sequence in $H^k(\R^d)$. Thus, we can take a subsequence, denoted in the same way, such that $\mathbf{b}_n\rightarrow\mathbf{b}_0$ weakly in $H^k(\R^d)$. Since $U_{ad}$ is closed and convex, $\mathbf{b}_0\in U_{ad}$.
On the other hand, following the proof of Theorem \ref{Theorem1}, we have that $\{\mathbf{Q}_n\}$ is bounded in $H^1(H^k(\R^d))$. Therefore we can assume, by taking a subsequence if necessary, that $\mathbf{Q}_n\rightarrow\mathbf{Q}_0$ weakly in $H^1(H^k(\R^d))$. Using the continuity of the inclusion $H^1(H^k(\R^d))\subset C(H^{k-1}(\R^d))$ we can pass to the limit in system \eqref{eq:existence:1} satisfied by $(\mathbf{Q}_n,\mathbf{b}_n)$ and to conclude that $(\mathbf{Q}_0,\mathbf{Q}_0)$ also satisfies \eqref{eq:existence:1}. Namely, $(\mathbf{Q}_0,\mathbf{b}_0)$ is a feasible pair for problem \eqref{optProblem}. Taking into consideration that $J(\mathbf{Q},\mathbf{b})$ is
weakly lower semi-continuous, the result follows.
\end{proof}

Let us remark that the assumption of Theorem \ref{TheorExiMini} can be checked by taking $\mathbf{Q}\in H^1(H^k(\R^d))$ with $\mathbf{Q}(0)=f$. This uniquely defines $\mathbf{b}$ from the partial differential equation \eqref{eq:existence:1} as $\mathbf{b}=\mathbf{Q}(0)$. If $\mathbf{b}\in U_{ad}$ then the assumption is satisfied. This is the case if $U_{ad}=H^k(\R^d)$.

Finally we state the following proposition giving the optimality conditions satisfied by the solutions $(\mathbf{Q}_0,\mathbf{b}_0)$ of \eqref{optProblem}. Its proof is based on Corollary \ref{Cor1} and the results obtained in Section \ref{Section2.1}.

\begin{proposition}
Let $k>\frac{d}{2}+1$ and assume that $(\mathbf{Q}_0,\mathbf{b}_0)$ is a solution of \eqref{optProblem}. Then there exist a unique element $\mathbf{P}_0\in C(H^k(\R^d))$ such that the following system is satisfied
\begin{eqnarray*}
\begin{array}{c}
\partial_t \mathbf{Q}_0 + \mathbf{J}_R \partial_x \mathbf{Q}_0 = \mathbf{L} ( \mathbf{Q}_0 )
 \;, \\
 \mathbf{Q}_0( x,T) = \mathbf{b}_0 \;,
\end{array}
\end{eqnarray*}
\begin{eqnarray*}
\begin{array}{c}
\partial_t \mathbf{P}_0  + \mathbf{J}_R^T \partial_x \mathbf{P}_0  = \frac{\partial \mathbf{L}}{\partial \mathbf{Q_0}}^T \mathbf{P}_0  +  \frac{1}{2}\nabla_{\mathbf{Q}_0} ( (\psi - \bar{\psi}  )^2 )  -  \mathbf{D} \partial_x \mathbf{Q_0}
 \;, \\
 \mathbf{P}_0( x,T) = \mathbf{0} \;,
\end{array}
\end{eqnarray*}
\begin{eqnarray*}
\begin{array}{c}
\mathbf{P}_0|_{t=\mathbf{0}}.
\end{array}
\end{eqnarray*}
\end{proposition}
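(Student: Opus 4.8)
The plan is to obtain the optimality system as the first-order condition of the reduced problem, using the smoothness of the control-to-state map from Corollary~\ref{Cor1} to produce the gradient, and then the duality identity of Section~\ref{Section2.1} to represent that gradient through an adjoint state. First I would set $\mathbf{Q}_\mathbf{b}:=G(\mathbf{b})$ for $\mathbf{b}$ in the neighborhood $V_{\mathbf{b}_0}$ furnished by Corollary~\ref{Cor1}, and define the reduced cost $j(\mathbf{b}):=J(\mathbf{Q}_\mathbf{b},\mathbf{b})$. Since $G$ is $C^\infty$ on $V_{\mathbf{b}_0}$, $j$ is differentiable there, and because $(\mathbf{Q}_0,\mathbf{b}_0)$ solves \eqref{optProblem} with $\mathbf{b}_0$ in the convex set $U_{ad}$, minimality gives the variational inequality $\langle j'(\mathbf{b}_0),\mathbf{b}-\mathbf{b}_0\rangle\ge0$ for all $\mathbf{b}\in U_{ad}$. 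The chain rule yields $\langle j'(\mathbf{b}_0),h\rangle=\langle D_\mathbf{Q}J(\mathbf{Q}_0,\mathbf{b}_0),V\rangle+\langle D_\mathbf{b}J(\mathbf{Q}_0,\mathbf{b}_0),h\rangle$ with $V=DG(\mathbf{b}_0)\cdot h$ the unique solution of the sensitivity system of Corollary~\ref{Cor1}, and $\langle D_\mathbf{Q}J(\mathbf{Q}_0,\mathbf{b}_0),V\rangle=\sum_k\int_0^T\langle V_k,(\psi-\bar\psi)\,\partial\psi/\partial\mathbf{Q}_{0,k}\rangle$ by the form of $J$.

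Next I would introduce $\mathbf{P}_0$ as the solution of the backward linear problem
\[
\partial_t \mathbf{P}_0 + \mathbf{J}_R(\mathbf{Q}_0)^T \partial_x \mathbf{P}_0 = \frac{\partial \mathbf{L}}{\partial \mathbf{Q}_0}^T \mathbf{P}_0 + \frac{1}{2}\nabla_{\mathbf{Q}_0}\big((\psi-\bar\psi)^2\big) - \mathbf{D}\,\partial_x \mathbf{Q}_0, \qquad \mathbf{P}_0(\cdot,T)=\mathbf{0}.
\]
Existence and uniqueness of $\mathbf{P}_0\in C(I,H^k(\R^d))$ follow from Theorem~\ref{Theorem1}: the coefficient matrix $\mathbf{J}_R(\mathbf{Q}_0)^T$ is built from $\mathbf{Q}_0\in C(I,H^k)$, the right-hand side lies in $C(I,H^k)$ by the regularity hypotheses on $\mathbf{L},\psi,\bar\psi$, and this symmetrizable hyperbolic system is time-reversible, so the terminal-value problem is equivalent to an initial-value problem of the type covered by Theorem~\ref{Theorem1} (equivalently, one may invoke the isomorphism property of $\partial_\mathbf{Q}F$ established in Corollary~\ref{Cor1}). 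With $\mathbf{P}_0$ fixed, I would pair the sensitivity equation for $V$ with $\mathbf{P}_0$ and integrate by parts in $x$ and $t$ exactly as in the derivation leading to \eqref{eq:adjoint:1}, using $\mathbf{P}_0(\cdot,T)=\mathbf{0}$, the initial datum for $V$, and the decay as $|x|\to\infty$ to annihilate all boundary terms; the matrix $\mathbf{D}$ of \eqref{eq:defD} is precisely what makes the non-conservative cross terms cancel. This produces the identity $\langle D_\mathbf{Q}J(\mathbf{Q}_0,\mathbf{b}_0),V\rangle=\langle \mathbf{P}_0|_{t=0},h\rangle$, so that $j'(\mathbf{b}_0)=\mathbf{P}_0|_{t=0}+\alpha\,\mathcal{R}(\mathbf{b}_0)$, where $\mathcal{R}(\mathbf{b}_0)$ is the $H^k$-Riesz representative coming from the regularization term; combined with the state equation for $\mathbf{Q}_0$, the adjoint equation for $\mathbf{P}_0$, and the variational inequality above, this is the announced optimality system, recovering \eqref{FormalGrad:eq-1}.

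The main obstacle is to make the formal computation of Section~\ref{Section2.1} rigorous at the present level of regularity. One must check that every space-time integral arising from the pairing converges — this is where $k>d/2+1$ is used, since it forces $\mathbf{Q}_0,V,\mathbf{P}_0\in C(I,C^1(\R^d))$ — that the commutator-type terms collected in $\mathbf{D}$ are integrable against $\mathbf{P}_0$, and, most delicately, that the boundary terms at spatial infinity vanish, i.e.\ that the decay hypothesis $\lim_{|x|\to\infty}P_i(x,t)=0$ is legitimately inherited from the $H^k$ framework. A secondary subtlety is the bookkeeping of time directions: the state problem carries data at one end of $I$ and the adjoint at the other, so the integration by parts in $t$ must be arranged so that the only surviving endpoint contribution is $\langle V(0),\mathbf{P}_0(0)\rangle$, which is what pins down $\mathbf{P}_0|_{t=0}$ as the gradient and, together with the well-posedness from Theorem~\ref{Theorem1}, gives uniqueness of $\mathbf{P}_0$.
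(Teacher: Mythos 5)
Your proposal is correct and follows exactly the route the paper indicates: the paper gives no detailed proof of this proposition, stating only that it ``is based on Corollary \ref{Cor1} and the results obtained in Section \ref{Section2.1},'' and your argument --- differentiating the reduced cost via the $C^\infty$ control-to-state map $G$, solving the backward linear adjoint problem by Theorem \ref{Theorem1}, and identifying $\mathbf{P}_0|_{t=0}$ with the gradient through the duality computation of Section \ref{Section2.1} --- is a faithful (and more careful) execution of precisely that plan. Your explicit handling of the variational inequality over $U_{ad}$ and of the $\alpha$-regularization term goes slightly beyond what the paper records, but is consistent with it.
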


\section{The numerical scheme}\label{section:numericalmethod}
In this section, we present the scheme for solving simultaneously  conservative as well as non-conservative hyperbolic balance laws of the form (\ref{eq:unifiedsystem:1}). Cell averages are evolved by following the one-step finite volume formula
\begin{eqnarray}
\label{one-step:1}
\begin{array}{lcl}
\mathbf{Q}_i^{n+1} &=&  \mathbf{Q}_i^{n} - \eta \frac{\Delta t}{\Delta x} (\mathbf{F}_{i+\frac{1}{2}}-\mathbf{F}_{i-\frac{1}{2}})- \eta\frac{\Delta t}{\Delta x} (\mathbf{D}_{i+\frac{1}{2}} + \mathbf{D}_{i-\frac{1}{2}} ) \\
& &- \frac{\eta}{\Delta x} (\mathbf{B}\partial \mathbf{Q})_i + \eta \Delta t \mathbf{S}_i \;, 
\end{array}
\end{eqnarray}
where $\eta $ controls the evolution of the system. This is because in the context of the adjoint method we need the constraint PDE evolves forward in time up to $T$ and then the adjoint model evolves backward in time from $T$ to $0$. So, $\eta = 1$ makes the forward in time and $\eta = -1 $ makes the backward in time. 
This formula is a conventional one, available in the literature for solving non-conservative schemes, see for instance \cite{Dumbser:2014a}.  However, we are going to modify the usual implementation of these type of schemes in order to account for the correct wave propagation. By the sake of completeness, we provide a detailed description of the scheme. We have
\begin{eqnarray}
\label{integrals:eq-1}
\begin{array}{c}
( \mathbf{B}\partial \mathbf{Q})_i = \frac{1}{\Delta t \Delta x} \displaystyle \int_{t}^{ t^{n+1} }
\displaystyle \int_{x_{ i-\frac{1}{2} }}^{ x_{ i+\frac{1}{2} } }
\mathbf{B}( \mathbf{Q}_i (\xi,\tau)) \partial_x  \mathbf{Q}_i(\xi,\tau) d\xi d\tau \;, 

\\

\mathbf{S}_i = \frac{1}{\Delta t \Delta x} \displaystyle \int_{t}^{t^{n+1}}
\displaystyle \int_{ x_{ i-\frac{1}{2} }}^{ x_{ i+\frac{1}{2} } }
\mathbf{S}( \mathbf{Q}_i (\xi,\tau)) d\xi d\tau  \;
\end{array}
\end{eqnarray}
and
\begin{eqnarray}
\label{integrals:eq-2}
\begin{array}{c}
\mathbf{\mathbf{F}}_{i+\frac{1}{2}} =  \frac{1}{\Delta t} \int_{t^n}^{t^{n+1}} 
\tilde{\mathbf{F}}_{h}( \mathbf{Q}_{i+\frac{1}{2}}^{-}(\tau ), \mathbf{Q}_{i+\frac{1}{2}}^{+}(\tau ) ) d\tau \;,  
\\
\mathbf{D}_{i+\frac{1}{2}} =   \frac{1}{\Delta t} \int_{t^{n}}^{t^{n+1}} 

\tilde{  \bf D}(  \mathbf{Q}_{i+\frac{1}{2}}^{-}(\tau ), \mathbf{Q}_{i+\frac{1}{2}}^{+}(\tau ) ) d\tau \;,

\end{array}
\end{eqnarray}
where
\begin{eqnarray}
\label{DOT:eq-1}
\begin{array}{c}

\tilde{\mathbf{F}}_{h}( \mathbf{Q}_{h}^{-}(\tau ), \mathbf{Q}_{h}^{+}(\tau ) )
= \\
\frac{1}{2}(
\mathbf{F}( \mathbf{Q}_{h}^{-} ) + (\mathbf{F}( \mathbf{Q}_{h}^{+}  ) ) 
- \eta \displaystyle \frac{1}{2} \int_{0}^{1} 

| \mathbf{A}(\Psi(s; \mathbf{Q}_{h}^{-},\mathbf{Q}_{h}^{+} )) |     \frac{d\Psi}{ds} ds \;.
\end{array}
\end{eqnarray}

Moreover  $\mathbf{A} := \frac{\partial \mathbf{F}}{\partial \mathbf{Q}}$, with $|\mathbf{A}| = \mathbf{R} |\Lambda|\mathbf{R}^{-1} $, where $|\Lambda| = diag( |\lambda_1|,...,|\lambda_m|)$ and then numerical  jumps are obtained as  
\begin{eqnarray}
\begin{array}{c}

\tilde{  \bf D}(  \mathbf{Q}_{h}^{-}(\tau ), \mathbf{Q}_{h}^{+}(\tau ) ) = 
\eta \cdot \frac{1}{2} \displaystyle \int_{0}^{1} 

\mathbf{B}( \Psi(s; \mathbf{Q}_{h}^{-}, \mathbf{Q}_{h}^{+} )   
\frac{d\Psi}{ds} ds \;,
\end{array}
\end{eqnarray}
with
\begin{eqnarray}
\begin{array}{c}
\Psi(s; \mathbf{Q}_{h}^{-}, \mathbf{Q}_{h}^{+} )  :=  \mathbf{Q}_{h}^{-} + s(  \mathbf{Q}_{h}^{+} -  \mathbf{Q}_{h}^{-}  ) \;.
\end{array}
\end{eqnarray}

Equation (\ref{DOT:eq-1}) corresponds to the DOT (Dumbser-Osher-Toro) Riemann solver, introduced in \cite{Dumbser:2010b}. The integrals are evaluated numerically, by using some quadrature rule. Here $\mathbf{Q}_{i}(\xi,\tau)$ is a predictor inside the computational cell $[x_{i-\frac{1}{2}}, x_{i+\frac{1}{2}}]$ and $ \mathbf{Q}_{i+\frac{1}{2} }^{\pm} (\tau)$ are extrapolations of the solution at both sides of the cell interface position $x = x_{i+\frac{1}{2} }$. They are usually computed by using the so-called Cauchy-Kowalewsky procedure and Taylor series expansions.

\subsection{The predictor step for second order of accuracy}
In this section, we deal with the strategy to get predictor inside the computational cell. This stage requires two ingredients, the first one is a polynomial representation of the solution and then a local evolution in time of approximate values within the computational cell at located spatial positions.  The second stage is given by the evolution of values within the computational cell, the resulted state of the local evolution is commonly referred to as the predictor.

The polynomial representation of the solution is carried in terms of the so called {\it reconstruction procedure}, \cite{Harten:1987a}. 
Let us define the reconstruction polynomial within cell $[x_{i - \frac{1}{2}},  x_{i + \frac{1}{2}}]$ as
\begin{eqnarray}
\begin{array}{c}
\mathbf{P}_i(\xi) = \mathbf{Q}_i^n + (\xi - \frac{1}{2}) \mathbf{\Delta}_i\;,
\end{array}
\end{eqnarray}
with $ \mathbf{\Delta}_i$ being a slope endowing the scheme with the Total Variation Diminishing property. In this work we use the so-called Minmod limiter, see the pioneering work \cite{Toro:2009a} about TVD flux limiters for hyperbolic conservation laws.

In a component wise, it has the form
\begin{eqnarray}
\begin{array}{ccc}

\mathbf{\Delta}_{i,j} = \left\{
\begin{array}{ccl}
0   & ,& if\;  ( \mathbf{Q}_{i,j}^n - \mathbf{Q}_{i-1,j}^n ) \cdot ( \mathbf{Q}_{i+1,j}^n - \mathbf{Q}_{i,j}^n ) \leq 0 \;,  \\

( \mathbf{Q}_{i,j}^n - \mathbf{Q}_{i-1,j}^n )   & ,& if \;  | \mathbf{Q}_{i,j}^n - \mathbf{Q}_{i-1,j}^n | <  |( \mathbf{Q}_{i+1,j}^n - \mathbf{Q}_{i,j}^n ) | \;, \\

( \mathbf{Q}_{i+1,j}^n - \mathbf{Q}_{i,j}^n )   & ,& if \;  | \mathbf{Q}_{i+1,j}^n - \mathbf{Q}_{i,j}^n | <  |( \mathbf{Q}_{i,j}^n - \mathbf{Q}_{i-1,j}^n ) | \;. \\ 
 
\end{array}

\right. 

\end{array}
\end{eqnarray}

Polynomial $ \mathbf{P}_i$ is defined in terms of a local variable  $\xi\in[0,1]$ and it is related with the computational cell $[x_{i - \frac{1}{2}},  x_{i + \frac{1}{2}}]$ through out the relationship $x = x_{i - \frac{1}{2}} + \xi \Delta x$. 

The predictor is an approximation of the solution within the cell, which represents the local evolution of the solution at $x$. This evolution in time is given in terms of a Taylor series expansion
\begin{eqnarray}
\begin{array}{c}
\mathbf{Q}_i (\xi,\tau) = \mathbf{Q}(\xi,0_+) + \tau \partial_t \mathbf{Q} (\xi, 0_+)\;.
\end{array}
\end{eqnarray}

Notice that the series is truncated up to the first spatial derivative. This is enough to produce solutions of second order of accuracy.
The time derivative is expressed in terms of purely spatial derivatives. This is called the Cauchy-Kowalewsky procedure and it is  based on the use of the governing equation
\begin{eqnarray}
\begin{array}{c}
\tau \partial_t \mathbf{Q} (\xi, \tau)=  - \mathbf{A}(\mathbf{Q} (\xi, \tau)) \partial_x \mathbf{Q}(\xi,\tau)\;.
\end{array}
\end{eqnarray}

Replacing into the expansion, we have
\begin{eqnarray}
\begin{array}{c}\mathbf{Q}_i (\xi,\tau) = \mathbf{Q}(\xi,0_+)   - \tau \mathbf{A}(\mathbf{Q} (\xi, 0_+)) \partial_x \mathbf{Q}(\xi,0_+)\;.
\end{array}
\end{eqnarray}

Notice that to obtain the evolution at time $\tau$,  we require an approximation of the solution at local time $\tau =0$, which is associated with the global time level $ t^n$. As finite volume only provide an approximation of cell averages, the punctual information within cells is not straightforward obtained. In this stage we use the polynomial representation $\mathbf{P}_i(\xi)$ of the solution at time level $t^n$, $\mathbf{Q}_i^n$, so the Taylor series expansion takes the form
\begin{eqnarray}
\begin{array}{c}\mathbf{Q}_i (\xi,\tau) = \mathbf{P}_i(\xi)   - \tau \mathbf{A}(\mathbf{P}_ (\xi) ) \mathbf{P}'_i(\xi)\frac{1}{\Delta x}\;,
\end{array}
\end{eqnarray}
which is a polynomial in time. Using this polynomial, we evaluate the integrals (\ref{integrals:eq-1}) and (\ref{integrals:eq-2}). In particular, for the flux evaluation, we require
\begin{eqnarray}
\begin{array}{lcl}
\mathbf{Q}^{-}_{i+\frac{1}{2}}(\tau) &=& \mathbf{Q}_{i+1}(0,\tau) \;, \\
\mathbf{Q}^{+}_{i+\frac{1}{2}}(\tau) &=& \mathbf{Q}_{i}(1,\tau) \;.
\end{array}
\end{eqnarray}

\subsection{Treatment for forward and backward evolution in time}\label{sect:variation}
The proposed method uses approximations of classical Riemann problem at cell interfaces, for computing the integrals in (\ref{integrals:eq-2}). This is carried out for both the forward ($\eta= 1$) and backward ($\eta= -1$) evolution in time, as hyperbolic equations are not reversible in general we may obtain different results and then we must select the right value for the minimization problem.
To illustrate this fact, let us consider the Burgers equation in the context of the unified system. So, the governing equation associated to the constraint PDE, has the form 
\begin{eqnarray}
\begin{array}{c}
\partial_t q + \eta q\partial_x q = 0 \;,
\end{array}
\end{eqnarray}
where $\eta =1 $ for forward evolution in time and $\eta = -1$ for backward evolution in time.  If we study Riemann problems associated to the unified system but restricted to the state variable, we have
\begin{eqnarray}
\begin{array}{c}
\partial_t q + \eta q\partial_x q = 0 \;, t > 0\;,\\
q(x,0)= \left\{
\begin{array}{c}
q_L\;, x<0\;, \\
q_R\;, x\geq 0 \;,\\
\end{array}
\right.
\end{array}
\end{eqnarray}
with $q_L$ and $q_R$ constant states. The Riemann problem solution of this problem is well studied in the literature, see for instance \cite{Toro:2009a}. Let us assume that $q_L > 0 > q_R$ and $\eta = 1$, so the solution at the interface position is a shock wave propagating with velocity $S = \frac{q_L +q_R}{2}< q_L$. Therefore, at the interface position it has the value $q(0,t) = q_L$.  If now, we solve backward in time the unified problem, we set $\eta = -1$, so we have  
\begin{eqnarray}
\begin{array}{c}
\partial_t q - q\partial_x q = 0 \;,\\
q(x,0)= \left\{
\begin{array}{c}
q_L\;, x<0\;, \\
q_R\;, x\geq 0 \;.\\
\end{array}
\right.
\end{array}
\end{eqnarray}

Then, by using the change of variable $x=-x$, we obtain
\begin{eqnarray}
\begin{array}{c}
\partial_t q + q\partial_x q = 0 \;,\\
q(x,0)= \left\{
\begin{array}{c}
q_R\;, x < 0\;, \\
q_L\;, x \geq 0 \;,\\
\end{array}
\right.
\end{array}
\end{eqnarray}
and so the solution corresponds to a rarefaction wave and thus, at the interface position, takes the value $q(0,t) = 0$. 

On the other hand, the adjoint  state in the backward evolution involves $q$ but it requires the value generated in the forward evolution, in this particular case the shock wave and not the right value of $q$ generated in the backward evolution (a rarefaction wave). If we choose the rarefaction wave, then we sub estimate the required values and thus it may have an impact in the minimization procedure.

To force the adjoint state to take the wave propagation of interest for us, we record the state variables from the solution obtained in the forward evolution in time and then use it to provide the evolution of the adjoint state in the backward evolution. That means for $\eta = -1$ we only use (\ref{one-step:1}) to evolve the variables associated to the adjoint states and we froze the evolution of state variables.  The suggested application of the strategy is listed below.
\begin{itemize}
\item[1)] Given $\mathbf{b}^k$, we solve the constraint PDE, using the one-step finite volume formula (\ref{one-step:1}) with $\eta = 1$ up to the output time $t = T$ and record from $\mathbf{Q}$ the variables associated to the state variables $\mathbf{U}$. Final time $T$ is reached in a finite number of steps, let say $n_T$. That means, starting from $t^0 = 0$, we do $t^{n+1} = t^{n} + \Delta t$,  $n_T$ times.

\item[2)] Solve the adjoint problem. We use the same formula (\ref{one-step:1}) with $\eta=-1$ and the initial condition $\mathbf{Q}^{n_T}_{j+m+n} = \mathbf{P}_{j}^n = 0$, but applied in a different way as in step 1). We turn off the evolution  associated with the state variables. That means, we do
\begin{eqnarray}
\label{one-step:1-1}
\begin{array}{c}
\mathbf{P}_{i,j}^{n} =  \mathbf{P}_{i,j}^{n+1}+ \frac{\Delta t}{\Delta x} (\mathbf{F}_{i+\frac{1}{2},j+m+n}-\mathbf{F}_{i-\frac{1}{2},j+m+n}) \\ 
+\frac{\Delta t}{\Delta x} (\mathbf{D}_{i+\frac{1}{2},j+m+n} + \mathbf{D}_{i-\frac{1}{2},j+m+n} ) + \frac{1}{\Delta x} (\mathbf{B}\partial \mathbf{Q})_{i,j+m+n} - \Delta t \mathbf{S}_{i,j+m+n} \;, 
\end{array}
\end{eqnarray}
for $j=1,...,(m+n)$. Subindex $j+m+n$ in  $\mathbf{F}_{i+\frac{1}{2}}$, $\mathbf{D}_{i+\frac{1}{2}}$, $(\mathbf{B}\partial \mathbf{Q})_{i}$ and $\mathbf{S}_{i}$ represent the $(j+m+n)th$ component. For computing the integrals (\ref{integrals:eq-1}) and (\ref{integrals:eq-2}) we require also the state variables, but those are used from the record carried out in step 1).
Index $n+1$ is consistent with the backward evolution in time, that means starting from $t^0 = T$, we do $t^{n+1} = t^n - \Delta t$.
So in a finite number of steps, $n_T$, we reach $t^{n_T} = 0$.

\item[3)] Compute the gradient of the cost function (\ref{eq:functional:0}) using variables associated to the adjoint variables as well as state variables. In this paper as shown formally in (\ref{FormalGrad:eq-1}), they normally have the form $\nabla J_i = \mathbf{P}_i^{n_T}$ (expressed in a  discrete form which is normally used in simulations). Notice that this procedure can have more general structures and may also depend on the state variables, so let us express the gradient formally in terms of a functional, $G(\mathbf{U},\mathbf{P},\mathbf{b})$. So the update of the sought parameters has the form
\begin{eqnarray}
\label{eq:updateB}
\mathbf{b}^{k+1}_i = \mathbf{b}^{k}_i - \lambda_{IP} \cdot G(\mathbf{U},\mathbf{P},\mathbf{b}^k)_i\;,
\end{eqnarray}
where $\lambda_{IP} $ is a prescribed constant value. Notice that, the procedure is carried out for finding parameters. However, for another type of problems, for example for finding initial conditions, the procedure is quite similar to the present one.

\item[ 4)] Stop the procedure using some stop criterion. This is normally carried in terms of the relative error between two successive approximations.

 \end{itemize}   

\section{Numerical examples}\label{sec:numerical-examples}
In this section, we solve PDE-constrained optimization problems using the numerical scheme described in section \ref{section:numericalmethod}. In order to assess the performance of the present scheme, we compare with a conventional solver and so we mimic the strategy usually employed for solving this type of problems, see the reference scheme in Appendix \ref{sec:reference-scheme}. 

Simulations carried out in this section are obtained with the stable time step, $\Delta t$, computed as
\begin{eqnarray}
\label{stable:timestep}
\begin{array}{c}
\Delta t = C_{cfl} \frac{ \Delta x}{ \lambda_\infty} \;,
\end{array}
\end{eqnarray}
where 
\begin{eqnarray}
\begin{array}{c}
\lambda_{\infty} = \underset{ i}{\max} \underset{j = 1,...,2(m+n)}{ \max }  | \lambda_{j} (\mathbf{H}(x_i) ) |\;,
\end{array}
\end{eqnarray}
with $ \lambda_{j}$ being the $j$th eigenvalue of the Jacobian matrix and $\mathbf{H}(x_i)$ the initial condition  evaluated at the barycentre of cell $[x_{i-\frac{1}{2}}, x_{i+\frac{1}{2}}]$, we choose $C_{cfl} = 0.1$.  Notice that, if the initial condition $\mathbf{H}(x)$ changes as a consequence of the change in the sought parameters, the time step must also change accordingly to (\ref{stable:timestep}).

\subsection{The Burgers equation}
Let us consider the optimal problem consisting of finding the initial condition $h_0(x)$ such that 
the solution of the following hyperbolic conservation law
\begin{eqnarray}
\label{eq:burg-forw:1}
\begin{array}{c}
\partial_t q + \partial_x( \frac{q^2}{2}  ) = 0\;, t\in[0,T] \;,
\\
q(x,0 ) = h_0(x) \;,
\end{array}
\end{eqnarray}
minimizes the functional 
\begin{eqnarray}
\label{eq:Jac-bur:1}
J(h_0) = \frac{1}{2}\int_{0}^{T} \int_{-\infty}^{\infty} (q(x,t)-\bar{q}(x,t))^2 dx dt \;,
\end{eqnarray}
where $\bar{q}$ is some measurement chosen to be the exact solution of the following Burgers equation at $t=T$
\begin{eqnarray}
\label{eq:burg-forw:2}
\begin{array}{c}
\partial_t q + \partial_x( \frac{q^2}{2}  ) = 0\;, t\in[0,T] \;,
\\
q(x,0 ) = \bar{h}_0(x) \;,
\end{array}
\end{eqnarray}
where $\bar{h}_0(x)$ is a prescribed function.  These problems are endowed with transmissive boundary conditions. 
The adjoint problem is given by
\begin{eqnarray}
\label{eq:burg-back:1}
\begin{array}{c}
\partial_t p + q \partial_x p = \bar{q}- q \;,t\leq T \;, \\
p(x,T) = 0 \;.
\end{array}
\end{eqnarray}
See \cite{Lecaros:2014a} for further details concerning the derivation of the adjoint problem. The gradient of the cost function is 
\begin{eqnarray}
\label{burger:jacobian-j}
\begin{array}{c}
\nabla J(h_0) = p(x,0)\;.
\end{array}
\end{eqnarray}

The unified formulation, as proposed in this paper takes the form
\begin{eqnarray}
\label{eq:system:1}
\begin{array}{c}
\partial_t \mathbf{Q} + \partial_x \mathbf{F}(\mathbf{Q} ) + \mathbf{B}(\mathbf{Q}) \partial_x \mathbf{Q} = \mathbf{S} ( \mathbf{Q} ) \;,\\
\end{array}
\end{eqnarray}
with 
\begin{eqnarray}
\begin{array}{c}

\mathbf{Q} =\left[
\begin{array}{c}
q \\
p
\end{array}
\right],
\;

\mathbf{F}( \mathbf{Q} )
 =\left[
\begin{array}{c}
\frac{q^2}{2} \\
0
\end{array}
\right],
\;

\mathbf{S}( \mathbf{Q} )
 =\left[
\begin{array}{c}
0 \\
\bar{q} - q
\end{array}
\right],
\;

\end{array}
\end{eqnarray}
and
\begin{eqnarray}
\begin{array}{c}
\mathbf{B}(\mathbf{Q})
=
\left[
\begin{array}{cc}
0 & 0 \\
0 & q 
\end{array}
\right]\;.
\end{array}
\end{eqnarray}

System (\ref{eq:system:1}) is solved from $t=0$ up to $t=T$, thus we need an initial condition at $t = 0$. Notice that $p(x,t)$ has a kind of initial condition only at $t = T$. At $t=0$ we have to set information for $p(x,t)$. However, as $p$ does not influences the evolution of $q$ we set $p(x,0) = p_0 = 0.$  So, the initial condition is $\mathbf{Q}(x,0) = [h_0(x), 0]^T$. On the other hand, at $t= T$, the second component is set to zero which means
$$\mathbf{Q}(x,T) =[ q(x,T), 0 ]^T \;.$$

In this way, the evolution of the adjoint variable begins with the right initial condition as required in (\ref{eq:burg-back:1}).  We are going to implement the procedure for continuous as well as discontinuous initial conditions.

In order to assess the performance of the solver we compute the error given by $ Error = max_{i} | h_{0,i} - \bar{h}_0(x_i)|$, where $h_{0,i}$ is the approximated initial condition at the cell $[x_{i-\frac{1}{2}}, x_{i +\frac{1}{2} }]$.

\subsubsection{The Burgers equation with discontinuous initial condition}
Let us consider the case in which the aim is to find a discontinuous initial condition. So we setup the optimization problem as follows. We set a starting guess for the procedure, in this case we set $h_0(x) = - 0.2 $. We solve both constraint PDE and the adjoint problem on the computational domain $[-3,3]$ using $200$ uniform cells. The expected initial condition is 
\begin{eqnarray}
\begin{array}{c}
\bar{h}_0 (x) = 
\left\{
\begin{array}{c}
\frac{1}{2} \;, x < 0 \;, \\ 
0           \;, x \geq 0 \;. \\ 
\end{array}
\right.
\end{array}
\end{eqnarray}
 
To update the sought initial condition we use (\ref{eq:updateB}) with $\lambda_{IP} = 0.7$ and $ G(\mathbf{U},\mathbf{P},\mathbf{b}^k)$ given by (\ref{burger:jacobian-j}).  We also implement the reference scheme described in appendix \ref{sec:reference-scheme} and using the same information described above.

The results are depicted in  Figure \ref{fig:disc-Burguers} at the $80$th iteration. The unified formulation has  $ Error= 6.05 \cdot 10^{-2} $ whereas the solution with the reference solution called here as the conventional solver has  $Error = 0.11$.  Figure    
\ref{fig:disc-Burguers-error} depicts the error at each iteration of the global procedure.  We observe that the error is not so good for both methods even when the unified formulation provides the best approximation, it shows undershoot and overshoot at the interface position but the undershoot is less than that resulting from the conventional solver.

\begin{figure}
\includegraphics[scale=0.5]{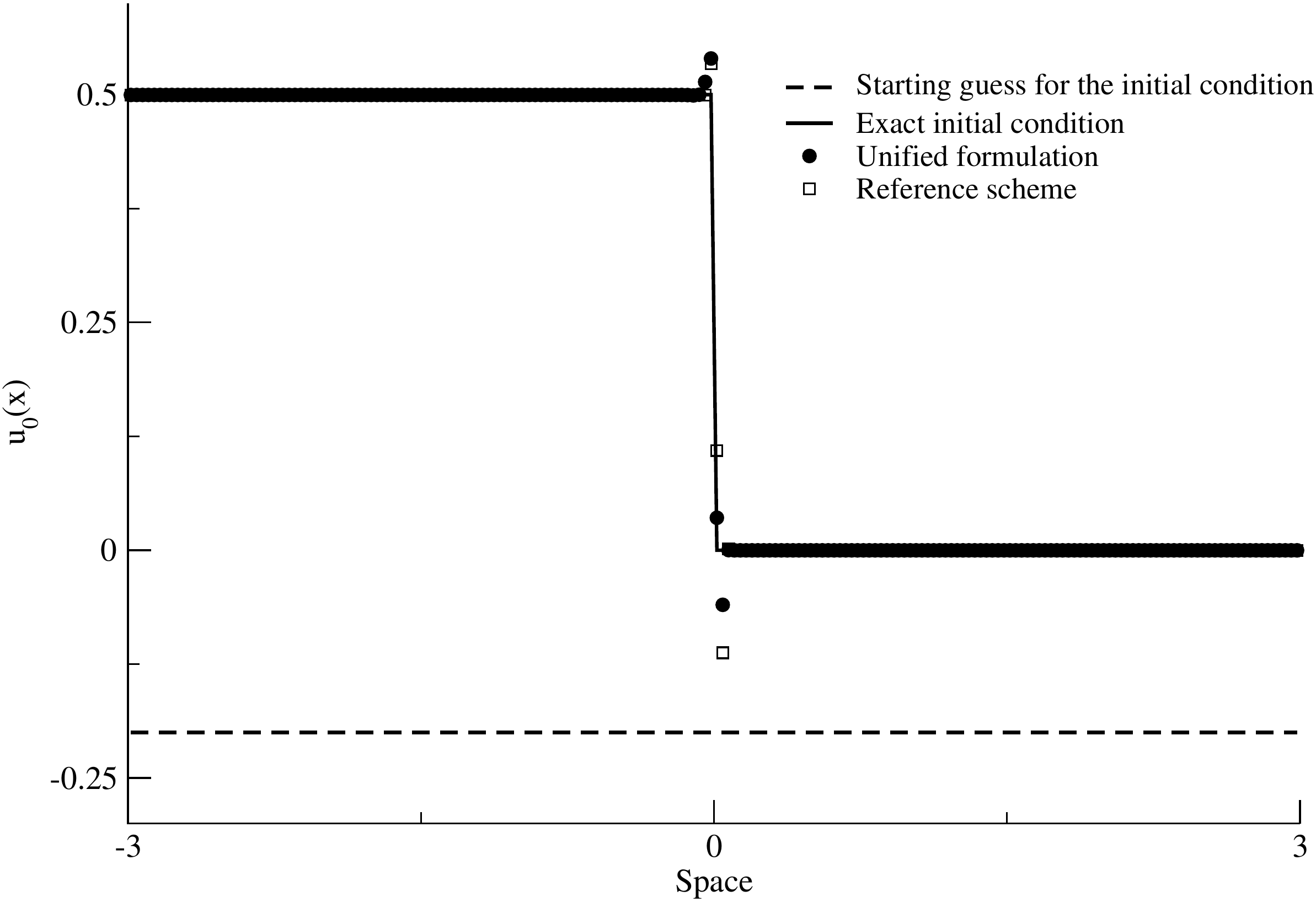}
\caption{Burgers, discontinuous initial condition. Sought initial condition, results of unified (circles) and conventional (squares) formulations. The constraint PDE problem is evolved up to $T = 0.12$.}\label{fig:disc-Burguers}
\end{figure}

\begin{figure}
\includegraphics[scale=0.5]{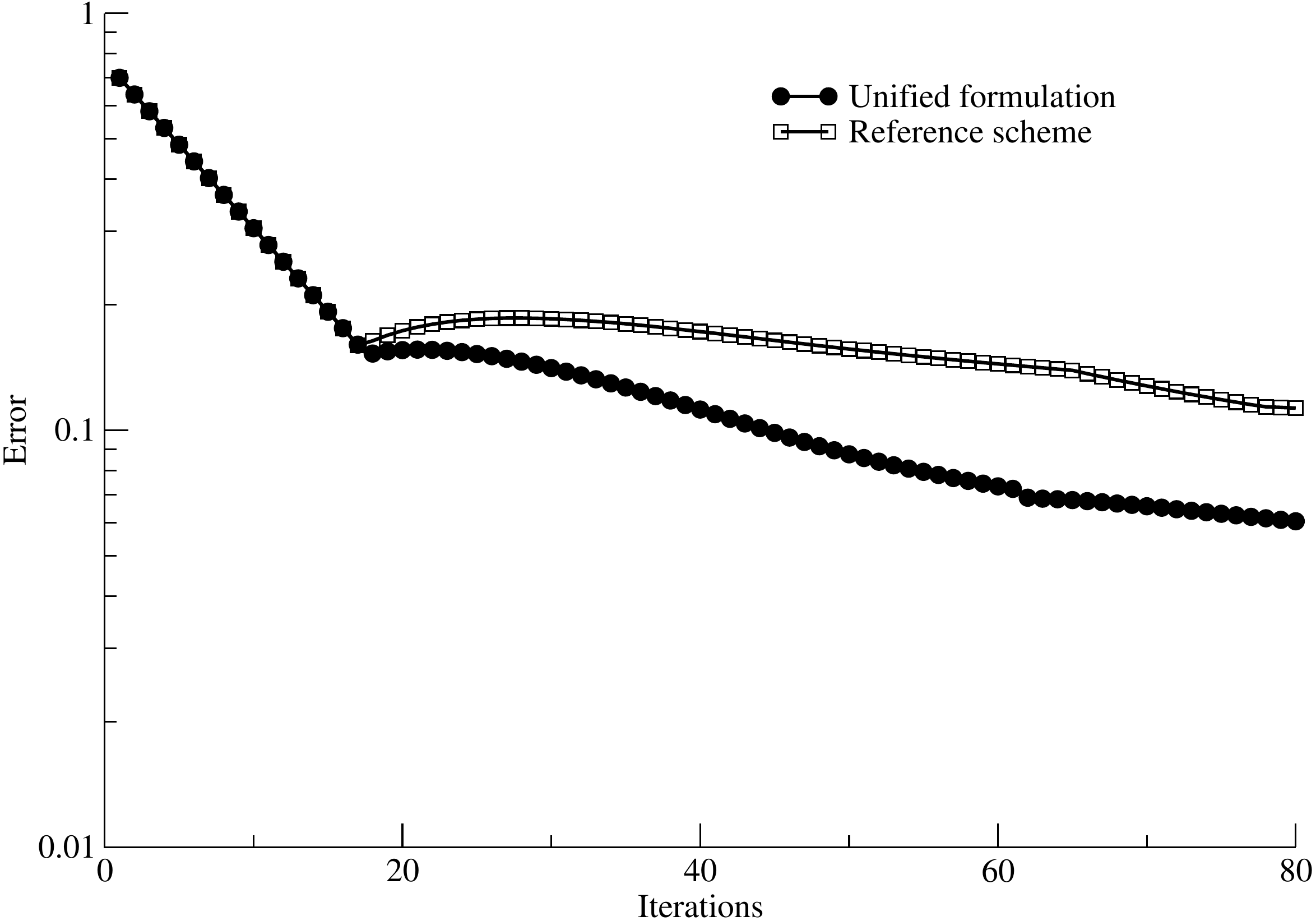}
\caption{Burgers, discontinuous initial condition. Comparison between errors associated with the unified formulation (squares) and conventional formulation (circles). The constraint PDE problem is evolved up to $T = 0.12$.}\label{fig:disc-Burguers-error}
\end{figure}

Note that in this test, the interface position is found and also the mean shape of the initial profile. However, we observe that the undershot and also the overshot appear near to the $18$th iteration as is shown in Figure \ref{fig:disc-Burguers-error}. Thus it propagates for a while and then it is stabilized. However, these undershoot and overshoot do not disappear.

%%%%%%%%%%
%% End test
%%%%%%%%%%
\subsubsection{The Burgers equation with continuous initial condition}
Let us consider the case in which the aim is to find a continuous initial condition. So we setup the descent algorithm as follows. The optimization procedure is initialized with $h_0(x) = 0 $. We solve both constraint PDE and the adjoint problem on the computational domain $[0,1]$  using $160$ uniform cells. The expected initial condition is 
\begin{eqnarray}
\begin{array}{c}
\bar{h}_0 (x) = sin(2 \pi x) \;.
\end{array}
\end{eqnarray}

To update the sought initial condition we use (\ref{eq:updateB}) with $\lambda_{IP} = 2.7$ and $ G(\mathbf{U},\mathbf{P},\mathbf{b}^k)$ given by (\ref{burger:jacobian-j}).  We also implement the reference scheme described in appendix \ref{sec:reference-scheme}.

\begin{figure}
\includegraphics[scale=0.5]{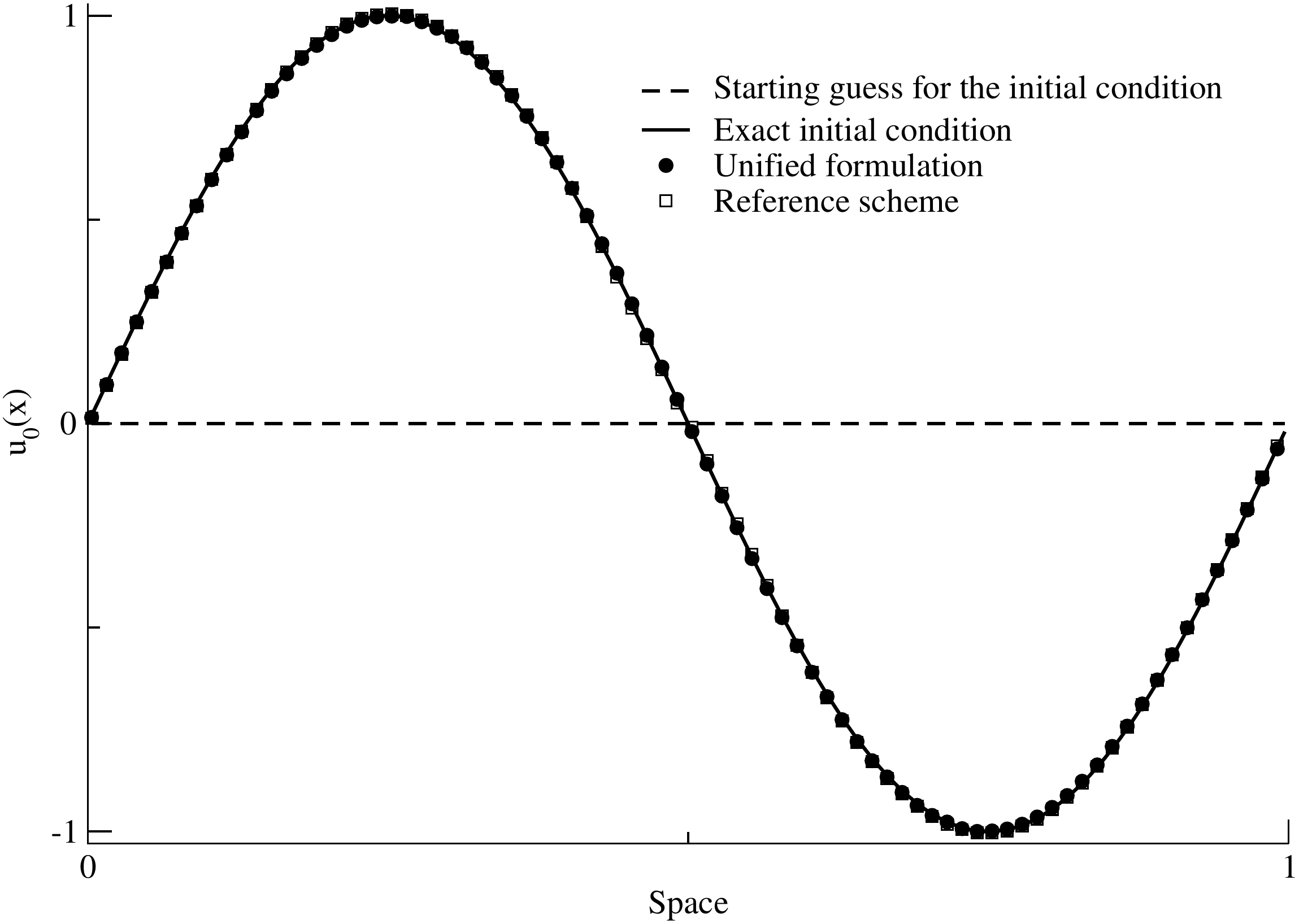}
\caption{Burgers, smooth initial condition. Sought initial condition,  results of unified (circles) and conventional (squares) formulations. The constraint PDE problem is evolved up to $T = 0.1$.} \label{fig:disc-BurguersC}
\end{figure}

\begin{figure}
\includegraphics[scale=0.5]{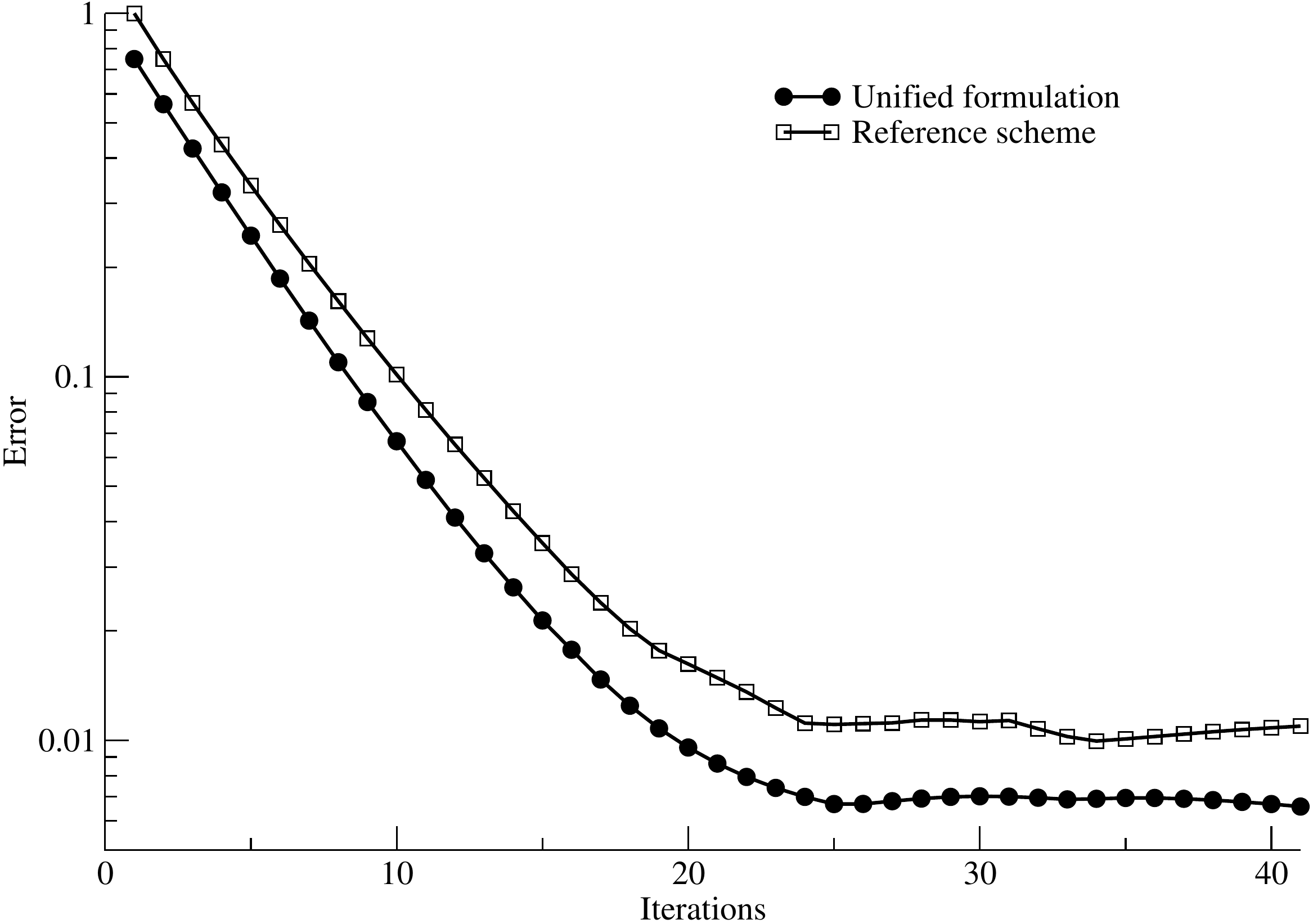}
\caption{Burgers, smooth initial condition. Comparison between errors associated with the unified formulation (squares) and conventional formulation (circles). The constraint PDE problem is evolved up to $T = 0.1$.}\label{fig:disc-BurguersC-error}
\end{figure}
The results are depicted in Figure \ref{fig:disc-BurguersC} at the $40$th iteration. The unified formulation has $ Error= 6.68 \cdot 10^{-3} $ whereas the solution with the reference scheme has $Error =1.08\cdot 10^{-2} $.  Figure \ref{fig:disc-BurguersC-error} depicts the error at each iteration of the global procedure.  We observe that the error for the present unified strategy is one-order of magnitude more accurate than the reference scheme.   If the variant described in section \ref{sect:variation} is not included, that means conventional finite volume implementation, the numerical scheme presented in section \ref{section:numericalmethod}) is of second order of accuracy. It seems to be that the high accuracy is still present even if the state variable is frozen in the evolution backward in time associated with the adjoint problem.  A CPU time measurement gives us that the present scheme is almost three times more expensive than the reference scheme. However, the solution of the constraint PDE, adjoint model and approximation update, take about three seconds, (it depends on $\Delta t$), even so the scheme is very efficient in terms of computational cost.

\subsection{The shallow water equations}\label{section:sw}
In this section we deal with the solution of the shallow water equation written as follows
\begin{eqnarray}
\label{eq:sw:1}
\begin{array}{c}
\partial_t
\left[
\begin{array}{c}
h\\
q
\end{array}
\right]
+
\partial_x 
\left[
\begin{array}{c}
\varepsilon q\\
\varepsilon \frac{q^2}{h} + \frac{1}{2}\frac{h^2}{\varepsilon}
\end{array}
\right]
=
\left[
\begin{array}{c}
 0\\
-h \partial_x b(x) 
\end{array}
\right]
\;,
\end{array}
\end{eqnarray}
where $\zeta(x,t)$ and $b(x)$ are the free surface and bottom parameterization respectively, $h(x,t)= 1 + \varepsilon (\zeta - b)$ is the total depth, $u(x,t)$ is the total depth averaged velocity, $\varepsilon$ is a dimensionless parameter
and $q=hu$ \cite{Lannes:2013a}.

There is an extensive literature concerning the general water-waves equations and shallow water approximations regimes \cite{rs1997modern,Lannes:2005a,Lannes:2013a}. Among the many applications to coastal engineering we focus on the bottom detection through measurements on the free surface $\zeta$. This problem has practical interests \cite{Nersisyan01082015} as well as theoretical challenges \cite{alazard2015control}. Because of the difficulties arising in the general water-wave system, many asymptotic models are proposed to study this phenomenon \cite{boussinesq1872theorie,israwi2011large,Lannes:2013a,Peregrine:1967a}.

We are going to consider the nonlinear shallow water equations (or Saint-Venant) \cite{de1871th} for study here the inverse problem consisting of finding the bottom surface $b(x)$ such that a prescribed free surface $ \bar{\zeta}(x,t)$ is achieved.  The adjoint problem associated to this problem is
\begin{eqnarray}
\label{SW:adjoint}
\begin{array}{c}
\partial_t
\left[
\begin{array}{c}
\tilde{h}\\
\tilde{q}
\end{array}
\right]
+
\left[
\begin{array}{cc}
     0       & \frac{h}{\varepsilon} - \varepsilon \frac{ q^2}{h^2} \\
\varepsilon  & 2 \varepsilon \frac{ q }{ h}
\end{array}
\right]
\partial_x 
\left[
\begin{array}{c}
\tilde{h}\\
\tilde{q}
\end{array}
\right]
=
\left[
\begin{array}{c}
 \frac{\bar{\zeta} - \zeta}{\varepsilon} + \tilde{q} \partial_x b \\
0
\end{array}
\right]
\;,
\end{array}
\end{eqnarray}
and the gradient of this functional is given by
\begin{eqnarray}
\label{SW:gradient}
\begin{array}{c}
\nabla J = \frac{  h -1}{ \varepsilon} + b - \bar{\zeta} + (h\tilde{q} )\;.
\end{array}
\end{eqnarray}

We neglect the term $(h\tilde{q} )$ and take $\lambda_{IP}\neq 1$. In this way the scheme is forced to do more than one iteration and the computed free surface has to be very close to the expected one. 

Notice that, the bottom surface is a prescribed function, with $\partial_t \mathbf{b} (x) = 0 $ so it can be included into the model as a non-evolutionary variable, it would allow us to include non-smooth bottom surfaces. So, the model now takes the form
\begin{eqnarray}
\label{eq:sw:1-with-b}
\begin{array}{c}
\partial_t
\left[
\begin{array}{c}
h\\
q \\
b
\end{array}
\right]
+
\partial_x 
\left[
\begin{array}{c}
\varepsilon q\\
\varepsilon \frac{q^2}{h} + \frac{1}{2}\frac{h^2}{\varepsilon} \\
0
\end{array}
\right]
+
\left[
\begin{array}{ccc}
 0 & 0 & 0\\
 0 & 0 & h   \\
 0 & 0 & 0\\
\end{array}
\right]
\partial_x
\left[
\begin{array}{c}
 h\\
 q  \\
 b
\end{array}
\right]
=
\left[
\begin{array}{c}
0\\
0\\
0
\end{array}
\right]
\;,
\end{array}
\end{eqnarray}

The unified formulation for this test is given by
\begin{eqnarray}
\label{eq:system:sw-1}
\begin{array}{c}
\partial_t \mathbf{Q} + \partial_x \mathbf{F}(\mathbf{Q} ) + \mathbf{B}(\mathbf{Q}) \partial_x \mathbf{Q} = \mathbf{S} ( \mathbf{Q} ) \;,\\
\end{array}
\end{eqnarray}
where
\begin{eqnarray}
\begin{array}{c}
\mathbf{Q}
\left[
=\begin{array}{c}
h \\
q \\
b \\
\tilde{h} \\
\tilde{q}
\end{array}
\right]\;,

\mathbf{F}( \mathbf{Q} )
\left[
=\begin{array}{c}
\varepsilon q\\
\varepsilon \frac{q^2}{h} + \frac{1}{2}\frac{h^2}{\varepsilon}\\
0 \\
0 \\
0
\end{array}
\right]\;,
\\
\mathbf{B}(\mathbf{Q} ) =
\left[
\begin{array}{ccccc}
0 & 0 & 0 & 0 & 0 \\
0 & 0 & h & 0 & 0 \\
0 & 0 & 0 & 0 & 0 \\
0 & 0 & 0 & 0 &  \frac{h}{\varepsilon} - \varepsilon \frac{q^2}{h^2}  \\
0 & 0 & -\tilde{q} & \varepsilon & 2\varepsilon \frac{q}{h}
\end{array}
\right]\;,
\mathbf{S}( \mathbf{Q} )
\left[
=\begin{array}{c}
0 \\
0 \\
0 \\

\frac{\bar{\zeta}-\zeta}{\varepsilon}
\end{array}
\right]\;.
\end{array}
\end{eqnarray}

Notice that the Jacobian matrix of $\mathbf{F}$ with respect to $\mathbf{Q}$, $\mathbf{A}(\mathbf{Q})$ is given by
\begin{eqnarray}
\begin{array}{c}
\mathbf{A}(\mathbf{Q})
=
\left[
\begin{array}{ccccc}
0 & \varepsilon & 0 & 0 & 0 \\

\frac{ h }{\varepsilon} -  \frac{q^2 }{  h^2} \varepsilon & 
2\varepsilon \frac{q}{h} & 0 & 0 & 0 \\
0 & 0 & 0 & 0 & 0 \\ 
0 & 0 & 0 & 0 & 0 \\ 
0 & 0 & 0 & 0 & 0 \\ 
\end{array}
\right]\;.
\end{array}
\end{eqnarray}

Moreover, this has the decomposition $\mathbf{A} = \mathbf{R} \mathbf{\Lambda} \mathbf{R}^{-1}\;,$ with 
\begin{eqnarray}
\begin{array}{c}
\mathbf{R}(\mathbf{Q})
=
\left[
\begin{array}{ccccc}
1 & 1 & 0 & 0 & 0 \\

\frac{ q^2 \varepsilon^2 -  h^3 }{  h  q \varepsilon^2 + h^{\frac{5}{2}}  \varepsilon} & 
\frac{ q^2 \varepsilon^2 - h^3}{ h q  \varepsilon^2 - h^{\frac{5}{2}} \varepsilon} & 0 & 0 & 0 \\
0 & 0 & 1 & 0 & 0 \\ 
0 & 0 & 0 & 1 & 0 \\ 
0 & 0 & 0 & 0 & 1 \\ 
\end{array}
\right] \;,
\end{array}
\end{eqnarray}
\begin{eqnarray}
\begin{array}{c}
\mathbf{\Lambda}(\mathbf{Q})
=
\left[
\begin{array}{ccccc}
u \varepsilon - \sqrt{h} & 0 & 0 & 0 & 0 \\

0 & u \varepsilon + \sqrt{h} & 0 & 0 & 0 \\
0 & 0 & 0 & 0 & 0 \\ 
0 & 0 & 0 & 0 & 0 \\ 
0 & 0 & 0 & 0 & 0 \\ 
\end{array}
\right] \;,
\end{array}
\end{eqnarray}
\begin{eqnarray}
\begin{array}{c}
\mathbf{R}(\mathbf{Q})^{-1}
=
\left[
\begin{array}{ccccc}

\frac{ q^{2} \epsilon^{2} - { h^{3}}}{2 h^{\frac{3}{2}} q  \varepsilon - 2 h^{3} }
& 
- \frac{\varepsilon}{2 \sqrt{ h } } 
 & 0 & 0 & 0 \\

- \frac{ q^{2} \epsilon^{2} - { h^{3}}}{2 h^{\frac{3}{2}} q  \varepsilon + 2 h^{3} }

& \frac{\varepsilon}{2 \sqrt{ h } } & 0 & 0 & 0 \\
0 & 0 & 1 & 0 & 0 \\ 
0 & 0 & 0 & 1 & 0 \\ 
0 & 0 & 0 & 0 & 1 \\ 
\end{array}
\right] \;.
\end{array}
\end{eqnarray}

In this test, the aim is to find the bottom surface $b(x)$ such that the model system (\ref{eq:sw:1}) provides a free surface $\zeta(x,t)$ close to the profile $\bar{\zeta}(x,t) = \frac{   0.3  (x - t) }{ cosh(x - t)^2}$.  Here (\ref{eq:sw:1})  is endowed with transmissive boundary conditions and initial condition  $q(x,t) = 0$, $h(x,0)=1$ and $b(x) = 0.2$, see Figure \ref{fig:sw:StartingGuess}. We use the propose methodology for finding the set of condition to find the sought profile up to the final time $t_{out} = 3$. For simulations we have used $300$ cells, $C_{cfl} = 0.1$, $\varepsilon = 0.01$. To update the sought bottom surface we set $\lambda_{IP}= 1.9$. Figure  \ref{fig:sw:Converged} shows the result of the simulation, as you can see the system achieve the sough profile with a good resolution despite of the few iterations as well.  Notice that $\bar{\zeta}$ does not correspond to any exact solution of model (\ref{eq:sw:1}) with initial conditions used in this model. So this is not a test using synthetic data. However, the procedure is able to provide an acceptable approximation of the sought profile. Model system consisting of (\ref{eq:sw:1}), (\ref{SW:adjoint}) and (\ref{SW:gradient}), is solved using the scheme of reference, appendix \ref{sec:reference-scheme}. Notice also that this strategy is independent of the elected functional $J$ and can be employed to deal with more general shallow water regimes \cite{israwi2011large,Lannes:2013a,Peregrine:1967a}, including the possibility of detecting moving bottoms $b(x,t)$.

\begin{figure}
\begin{center}
\includegraphics[scale=0.5]{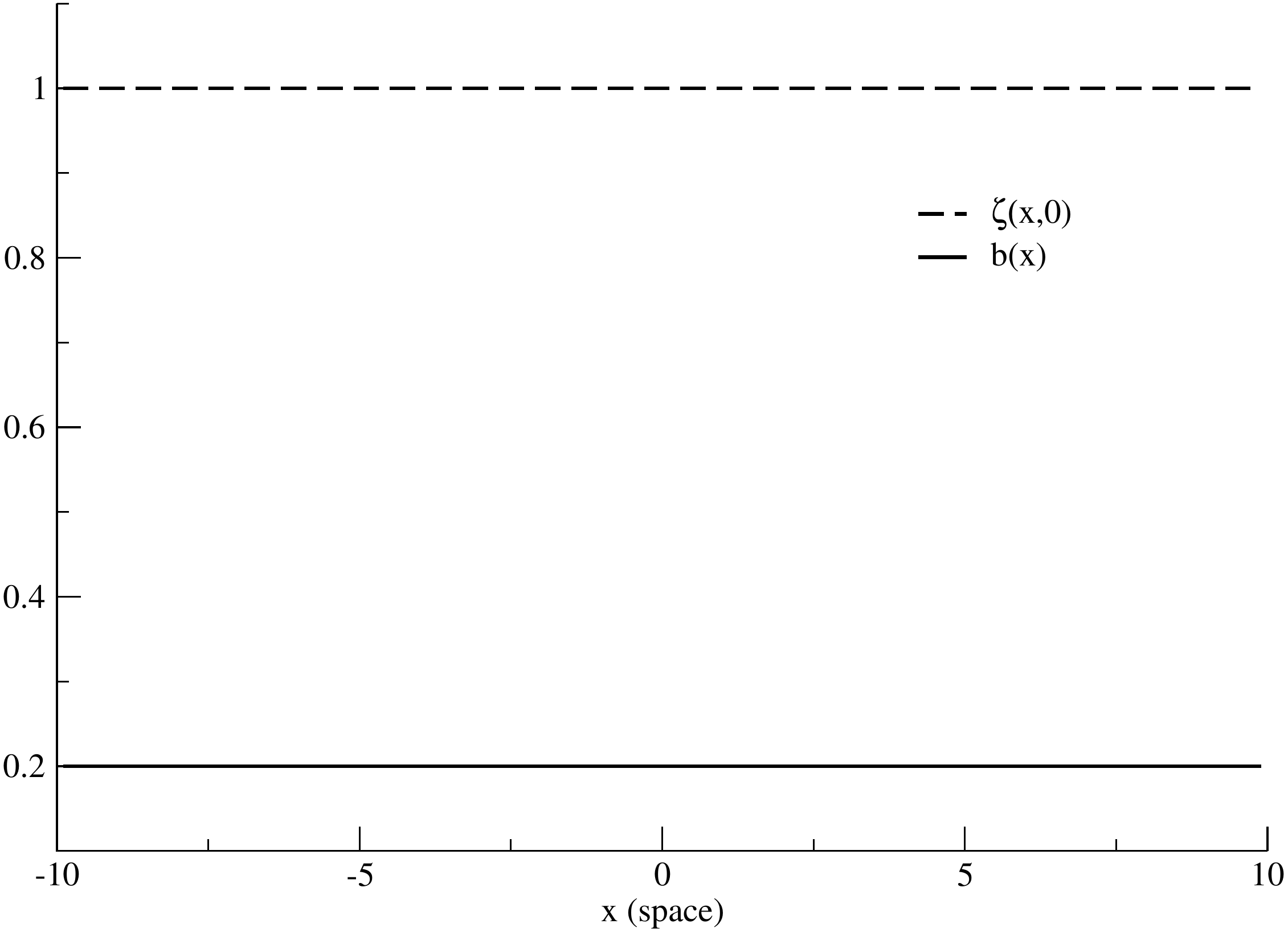}
\end{center}
\caption{Profile $\zeta(x,t)$ of the starting initial condition to initialize the global optimization procedure.}\label{fig:sw:StartingGuess}
\end{figure}
\begin{figure}
\begin{center}
\includegraphics[scale=0.5]{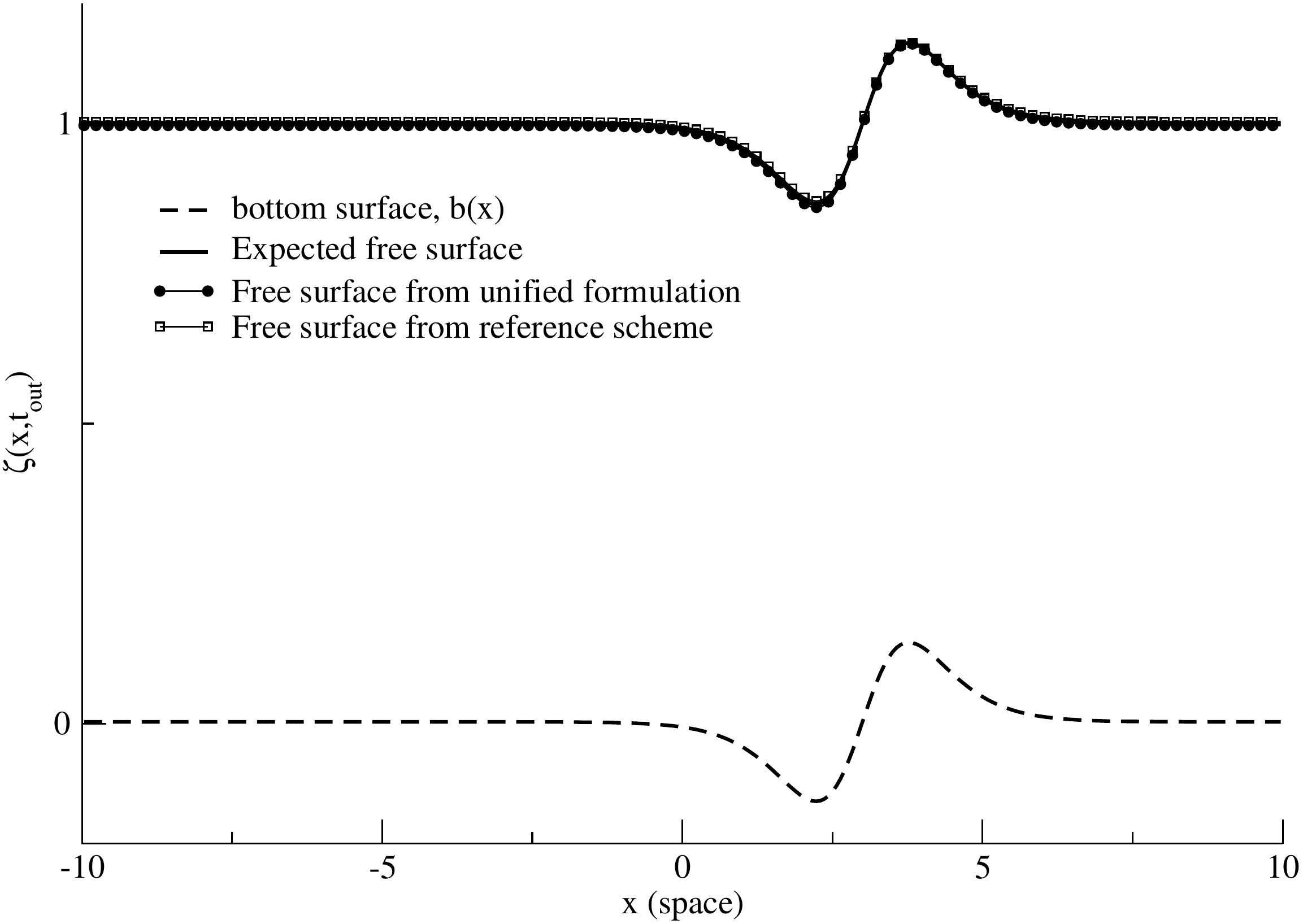}
\end{center}
\caption{Shallow water. Sought profile  for the shallow water, $\varepsilon = 0.01$. Using $300$ cells, $\lambda_{IP}= 1.9$. 
Bottom profile (thick line), free surface from unified formulation (circle), free surface from  reference scheme (square) and expected free surface (continuous line). 
At $40th$ iteration, unified formulation generates $Error =4.93 \cdot 10^{-3} $ and conventional formulation generate $Error =5.49 \cdot 10^{-3} $.   }\label{fig:sw:Converged}
\end{figure}
\begin{figure}
\begin{center}
\includegraphics[scale=0.5]{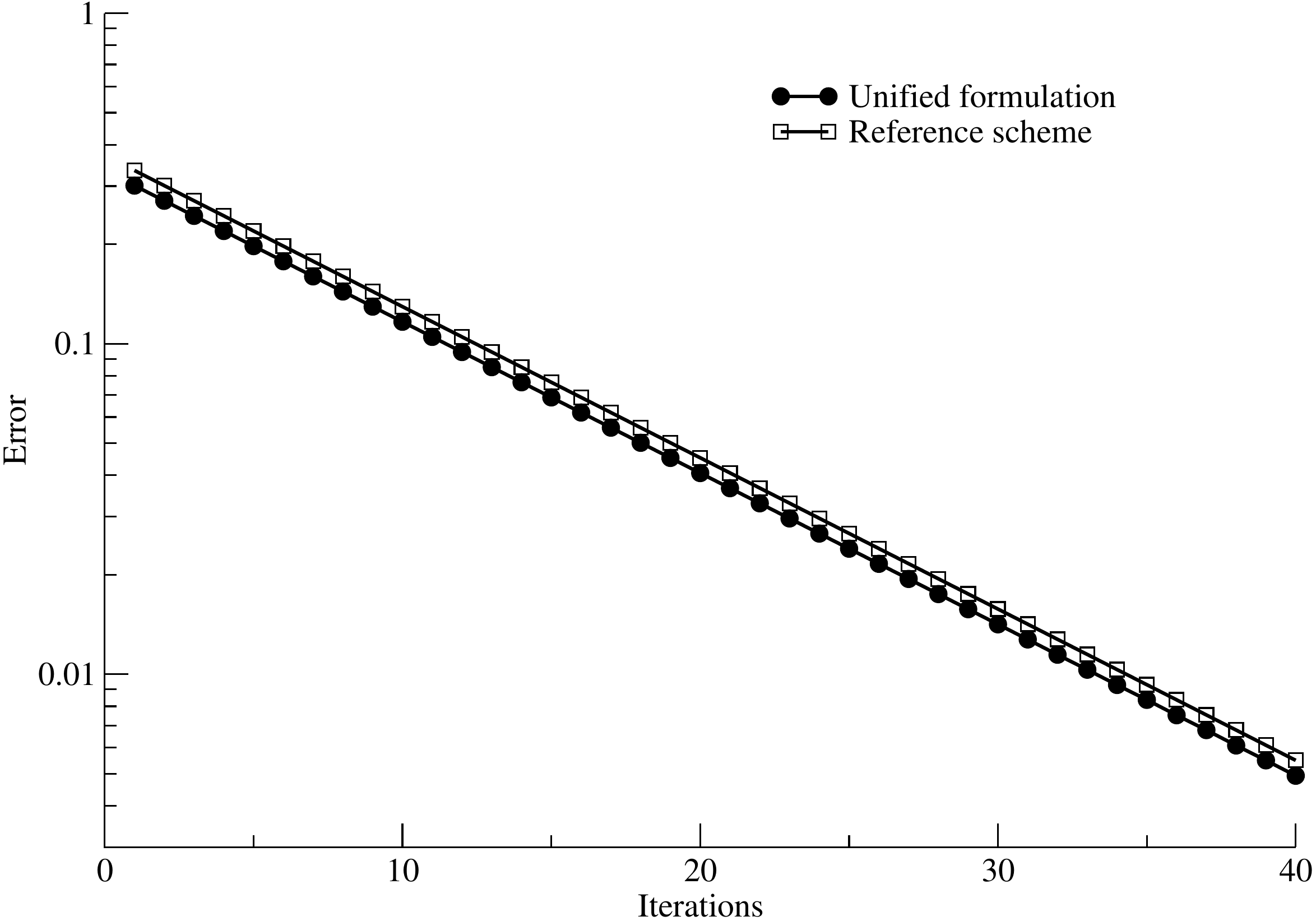}
\end{center}
\caption{Shallow water. Error measured as the distance with respect to the prescribed free surface for both, the unified formulation (square) and conventional formulation (circles).}\label{fig:sw:Error}
\end{figure}

\section{Conclusions}\label{sec:conclusions}
In the present paper, we have expressed the adjoint method for PDE-constrained optimization problems in a unified frame. A numerical scheme based on a class of high order finite volume schemes has been adapted for solving the unified system. The scheme is able to deal with non-conservative equations and we have seen that the degree of approximation is at least those of conventional solvers.  We have solved two test problems, the first one corresponds to the Burgers equations and the second one, the Nonlinear Shallow Water equations. In the case of Burgers equations, we assess two cases. First, we have solved the problem of finding an initial condition which is a discontinuous function, which generates a shock wave. The method finds the right location and also the main shape of the initial profile. We observed stationary overshot and undershot at the interface position, it is also observed by the reference scheme. Second, we recover a smooth initial condition, the constraint PDE is solved up to $T = 0.12$, before discontinuities start developing, so the solution remains continuous. In this case, the procedure has provided very good agreements in both the present and reference schemes. Regarding the shallow water equations, we also have obtained very good agreements. We have shown the evolution of the error between the computed sought parameters and that obtained in the simulations. We have observed that the present scheme always generated approximations with slightly better approximation than the reference scheme in the case of discontinuous solutions and for smooth cases, it has generated approximations whose accuracy is almost one order of magnitude.

\section*{Acknowledgements}
G.I. Montecinos thanks FONDECYT in the frame of the research project FONDECYT Postdoctorado 2016, number 3160743.

\appendix
\section{The numerical scheme of reference}\label{sec:reference-scheme}
In this section, we modify a strategy available in the literature for solving this type of problem, given by \cite{Lellouche:1994a}. The original scheme is implemented to solve an inverse problem to find information at boundaries. In this work we are interested in solving initial value problems, boundary conditions in our case are just of the transmissive type. Moreover, we set the strategy for finding model parameters $\mathbf{b}$, through out the adjoint method. So, we provide an initial parameter $\mathbf{b}$ and then we apply the following procedure. 

\begin{itemize}
\item[1)] Given $\mathbf{b}^k$, we solve the hyperbolic model  (\ref{eq:primal:0}). Tests in this paper are conservatives, we neglect source terms $\mathbf{L}$.  So, we use the following conservative scheme for marching in time up to the output time $t = T$, which is reached in a finite number of steps, let say $n_T$, that means starting from $t^0 = 0$, we do $t^{n+1} = t^{n} + \Delta t$,  $n_T$ times up to $t^{n_T} = T$.
\begin{eqnarray}
\begin{array}{c}
\mathbf{U}^{n+1}_i = \mathbf{U}^{n}_i - \frac{\Delta t}{\Delta x}   \left[  
\mathbf{R}_{i+\frac{1}{2}} - \mathbf{R}_{i-\frac{1}{2}} 
\right] + \Delta t \mathbf{L}(\mathbf{U}^{n}_i, \mathbf{b}^{k}_{i} ) \\ + \frac{\Delta t}{2 \Delta x} \tilde{\mathbf{B}}(\mathbf{U}^{n}_i, \mathbf{b}^{k}_{i} ) ( \mathbf{b}^{k}_{i+1} - \mathbf{b}^{k}_{i-1} ) \;,
\end{array}
\end{eqnarray}
where $ \mathbf{R}_{i+\frac{1}{2}} = \frac{1}{2}( \mathbf{R}( \mathbf{ U}_{i+1}^n,\mathbf{b}^k ) + \mathbf{R}(\mathbf{ U}_{i}^n,\mathbf{b}^k )  ) - \frac{\lambda_{i+\frac{1}{2}}  }{2} (\mathbf{ U}_{i+1}^n - \mathbf{ U}_{i}^n ) .$  This corresponds to the well known numerical flux of Rusanov, $ \lambda_{i+\frac{1}{2}} = \max ( \lambda (\mathbf{ U}_{i}^n), \lambda (\mathbf{ U}_{i+1}^n) )$ with $\lambda (\mathbf{ U}) $ being the maximum of the  eigenvalues (in magnitude) of the Jacobian matrix of $ \mathbf{R}$ with respect to $\mathbf{U}$. 

\item[ 2)] Once the previous step is completed. We start the backward evolution of the so called adjoint method. We use a finite difference approach.
We set the initial condition $\mathbf{P}^{0}_i = 0$.
\begin{eqnarray}
\begin{array}{c}

\mathbf{P}^{n}_i = \mathbf{P}^{n+1}_i + \frac{\Delta t}{2 \Delta x}\mathbf{ J}_R^T( \mathbf{U}^{n+1}_i ) \left[

\mathbf{P}^{n+1}_{i+1} - \mathbf{P}^{n+1}_{i-1}   
\right] - \Delta t \tilde{\mathbf{S}} ( \mathbf{U}^{n+1}_i ) \;.
\end{array}
\end{eqnarray}

Index $n+1$ is consistent with the backward evolution in time, that means starting from $t^0 = T$, we do $t^{n+1} = t^n - \Delta t$. So in a finite number of steps, $n_T$, we reach $t^{n_T} = 0$.

\item[ 3)] Update the parameter $\mathbf{b}^k$. It is carried out using the gradient of the cost functional (\ref{eq:functional:0}). The expression for the gradient of this functional depends on the problem at hand. In this paper they normally have the form $\nabla J_i = \mathbf{P}_i^{n_T}$ in terms of the discretization. Notice that this procedure can have more general structures and may also depend on the state variables, so let us express the gradient formally in terms of a functional, $G(\mathbf{U},\mathbf{P},\mathbf{b})$. So the update of the sought parameters has the form
\begin{eqnarray}
\mathbf{b}^{k+1}_i = \mathbf{b}^{k}_i -  \lambda_{IP} \cdot G(\mathbf{U},\mathbf{P},\mathbf{b}^k)_i\;,
\end{eqnarray}
where $\lambda_{IP} $ is a prescribed constant value.  Notice that, the procedure is carried out for finding parameters. However, for another type of problems, for example for finding initial conditions, the procedure is quite similar to the present one.
\item[ 4)] Stop the procedure using some stopped criterion. This is normally carried in terms of some relative error.
\end{itemize}

Let us point out that the scheme in \cite{Lellouche:1994a} is globally implicit, in turns, here we derive the global explicit version of it. This is because the proposed scheme in this paper is globally explicit.

%\section*{References}

\bibliographystyle{plain}
\bibliography{ref}
%\bibliography{/home/gino/Dropbox/Documents/relaxation/ref} % For linux

\end{document}